\tikzstyle{NE-lines}=[pattern=north east lines, pattern color=black!45]
\DeclareMathOperator{\img}{Im}
\newcommand{\conj}[1]{#1^{\star}}
\newcommand{\basis}{B}                    
\newcommand{\map}{\varphi}                
\newcommand{\alphas}{\alpha_n(S)}         
\newcommand{\betasw}{\kappa_n(S)}         
\newcommand{\betass}{\kappa^{\circ}_n(S)}        
\newcommand{\quand}{\;\;\,\text{and}\,\;\;}
\newcommand{\twoeul}{A}
\newcommand{\twocay}{B}
\newcommand{\twocayhat}{\hat{B}}
\newcommand{\ones}{\mathbf{1}}
\newcommand{\Cay}{\mathrm{Cay}}           
\newcommand{\Sym}{\mathrm{Sym}}           
\newcommand{\nat}{\mathbb{N}}             
\newcommand{\WI}{\mathrm{I}}              
\newcommand{\WIbin}{\WI^{01}}             
\newcommand{\WIgen}{\mathfrak{I}}         
\newcommand{\WIgenbin}{\WIgen^{01}}       
\newcommand{\Asc}{A}                      
\newcommand{\SAsc}{A^{\circ}}                    
\newcommand{\Des}{D}                      
\newcommand{\SDes}{D^{\circ}}                    
\newcommand{\asc}{\mathrm{asc}}           
\newcommand{\des}{\mathrm{des}}           
\newcommand{\id}{\mathrm{id}}
\newcommand{\sort}{\mathrm{sort}}         
\newcommand{\comp}{\mathrm{Comp}}         
\newcommand{\compbin}{\comp^{01}}         
\newcommand{\hproj}[1]{#1\cdot\ones}  
\newcommand{\bang}{\raisebox{-0.3ex}{\scaleto{!}{1.3ex}}}
\newcommand{\Bur}{\mathrm{Bur}}           
\newcommand{\BBur}{\Bur^{01}}             
\newcommand{\PBur}{\Bur^{\bang}}          
\newcommand{\Genbur}{\mathfrak{Bur}}      
\newcommand{\BGenbur}{\Genbur^{01}}
\newcommand{\PGenbur}{\Genbur^{\bang}}
\newcommand{\rows}{\mathrm{row}}
\newcommand{\cols}{\mathrm{col}}
\newcommand{\Mat}{\mathrm{Mat}}           
\newcommand{\BMat}{\Mat^{01}}             
\newcommand{\PMat}{\Mat^{\bang}}          
\newcommand{\Genmat}{\mathfrak{Mat}}      
\newcommand{\BGenmat}{\Genmat^{01}}
\newcommand{\PGenmat}{\Genmat^{\bang}}
\newcommand{\Bal}{\mathrm{Bal}}
\def\multiset#1#2{\ensuremath{\left(\kern-.3em\left(\genfrac{}{}{0pt}{}{#1}{#2}\right)\kern-.3em\right)}}
\newtheorem{theorem}{Theorem}[section]
\newtheorem{proposition}[theorem]{Proposition}
\newtheorem{lemma}[theorem]{Lemma}
\newtheorem{corollary}[theorem]{Corollary}
\newtheorem*{openproblem*}{Open Problem}
\newtheorem{conjecture}[theorem]{Conjecture}
\theoremstyle{definition}
\newtheorem{definition}[theorem]{Definition}
\newtheorem{remark}[theorem]{Remark}
\newtheorem*{remark*}{Remark}
\newtheorem{example}[theorem]{Example}
\newtheorem*{example*}{Example}
\title{Caylerian polynomials}
\author{Giulio Cerbai
\and
Anders Claesson}
\date{}
\begin{document}

\maketitle

\begin{abstract}
  The Eulerian polynomials enumerate permutations according to their number of
  descents.  We initiate the study of descent polynomials over Cayley
  permutations, which we call Caylerian polynomials.  Some classical
  results are generalized by linking Caylerian polynomials to Burge
  words and Burge matrices. The $\gamma$-nonnegativity of the two-sided
  Eulerian polynomials is reformulated in terms of Burge
  structures. Finally, Cayley permutations with a prescribed ascent set
  are shown to be counted by Burge matrices with fixed row sums.
\end{abstract}

\thispagestyle{empty}

\section{Introduction}

Let $\Sym[n]$ denote the set of permutations
of $[n]=\{1,2,\dots,n\}$. A \emph{descent} of a permutation
$v=v_1\cdots v_n\in\Sym[n]$ is an index $i\in[n-1]$
such that $v_i>v_{i+1}$, and we write $\des(v)$ to denote the total number of
descents of $v$. The $n$th \emph{Eulerian polynomial} is defined by
$$
A_n(t)=\sum_{v\in\Sym[n]}t^{\des(v)}
=\sum_{i=0}^{n-1} A(n,i)t^i,
$$
where $A(n,i)$ is the number of permutations of $[n]$ with $i$ descents.
The coefficients $A(n,i)$ are known as the \emph{Eulerian numbers}, and
a permutation statistic that is equidistributed with $\des$ is said to
be Eulerian; a well known example---due to MacMahon---is the number of
exceedances.  The study of Eulerian polynomials and numbers dates back
to the 1755 book \emph{Institutiones calculi differentialis}~\cite{E} by
Leonard Euler. His definition was not the familiar combinatorial one
given above. Rather, Euler defined his polynomials recursively by $A_0(t)=1$
and
\[
  A_n(t) = \sum_{k=0}^{n-1}\binom{n}{k} A_k(t)(t-1)^{n-1-k}.
\]
It was MacMahon~\cite{MaM} who initiated the
study of descent sets of permutations and later Carlitz and
Riordan~\cite{CR} pointed out the relation between descent sets
and Eulerian numbers.
Two alternative definitions of the Eulerian polynomials are given by
the identities
\[
  \frac{tA_n(t)}{(1-t)^{n+1}}
  = \sum_{m \geq 1} m^nt^m\quad\text{and}\quad
  m^n = \sum_{i=0}^{n-1}A(n,i)\binom{m+i}{i},
\]
the first due to Carlitz~\cite{C2} and the second due to Worpitzky~\cite{W}.

It is well known that the $n$th Eulerian polynomial is symmetric and
unimodal~\cite{FS}. A consequence is that it can be expressed as
$$
A_n(t)=\sum_{j}\gamma_{n,j}t^j(1+t)^{n-1-2j},
$$
for suitable coefficients $\gamma_{n,j}$. A beautiful combinatorial
proof that the coefficients $\gamma_{n,j}$ are nonnegative was given
by Foata and Strehl~\cite{FSt}. The proof is based on an action
on $\Sym[n]$ known as \emph{valley-hopping}. It induces a partition
of $\Sym[n]$, and the contribution of each class to the Eulerian polynomial
is equal to $t^j(1+t)^{n-1-2j}$, where $j-1$ is the number of valleys
of any permutation in the class. As a result, the coefficient
$\gamma_{n,j}$ counts the number of equivalence classes with
$j-1$ valleys, and must be nonnegative.
A broader introduction to this topic can be found in the book \textit{Eulerian numbers}~\cite{Pe2} by Petersen.
The same author wrote a short survey~\cite{Pe} that covers the two-sided
Eulerian numbers and the $\gamma$-nonnegativity of the Eulerian polynomials.

In 1978, Gessel and Stanley~\cite{GS} introduced Stirling permutations;
they are permutations of the multiset $\{1^2,2^2,\dots,n^2\}$ such that all
the entries between two copies of the same integer~$i$ are greater than~$i$.
Using the language of permutation patterns, this restriction translates to
avoiding the pattern $212$.
The rationale for their name is that a formula analogous to Carlitz identity
is obtained by replacing the Eulerian polynomial with the descent
polynomial over Stirling permutations, and replacing $m^n$ with the $(m+n,n)$th
Stirling number of the second kind. Several variants of Stirling
permutations have been studied.
Quasi-Stirling permutations~\cite{AGPS} are permutations of the multiset
$\{1^2,2^2,\dots,n^2\}$ that avoid $1212$ and $2121$.
In a note written in 1978 and published in 2020, Gessel~\cite{G}
considers Stirling permutations (in the sense of avoiding 212) of
the multiset $\{1^k,\dots,n^k\}$, for any $k\geq 1$; his
note also contains an up-to-date list of (some of) the literature devoted
to Stirling permutations.
Stirling and quasi-Stirling permutations of arbitrary multisets have been
studied by Brenti~\cite{Br} in the context of Hilbert polynomials, and
by Kuba and Panholzer~\cite{KP} in relation to increasing trees.

In this paper we explore descent polynomials over so called Cayley
permutations. They are permutations of
multisets (without any Stirling or quasi-Stirling restrictions), where
each element has positive multiplicity.
There are known links between Eulerian polynomials and Cayley permutations.
For instance, it is well known (see e.g.\ Stanley~\cite{St}) that the $n$th Eulerian
polynomial evaluated at $2$ is equal to the number of Cayley permutations
of length $n$, which is the $n$th Fubini number.
Furthermore, permutations of length $n$ whose descent set is a subset
of $S=\{s_1,\dots,s_k\}$, with $1\le s_1<s_2<\dots<s_k\le n-1$, are
enumerated by a multinomial coefficient that counts Cayley permutations
containing $s_1$ copies of $1$, $s_2-s_1$ copies of $2$, and so on.
One might say (somewhat imprecisely) that
descents over permutations are governed
by Cayley permutations. A natural question is then what combinatorial
structures govern descents over Cayley permutations.
Here we give an answer in terms of Burge
matrices and Burge words, providing a more general framework
from which most of the classical results for permutations
can be derived as special cases.

The rest of this article is structured as follows.  In
Section~\ref{section_prel} we provide necessary preliminaries. For
instance, we define Cayley permutations
and the related symmetries.  We also introduce Burge matrices and
Burge words. The former are matrices with
nonnegative integer entries whose every row and column contains
at least one nonzero entry.
The latter are biwords where the bottom row is a Cayley permutation
and the top row is a weakly increasing Cayley permutation whose
descent set is a subset of the descent set of the bottom row.

Section~\ref{section_binary} is focused on binary Burge matrices and the
even more restrictive subset of matrices whose every column sums to 1.
These two classes of matrices will later be seen to have close
connections to the Caylerian polynomials. We also make the following
curious observation: By reversing the set inequality that defines Burge
words we obtain a set of biwords that is equinumerous with binary Burge
matrices.

In Section~\ref{section_emptyrows}, we extend the correspondence between
Burge matrices and Burge words by allowing matrices to have empty rows.

In Section~\ref{section_caylpol} we study the weak and strict
descent polynomials over the set of Cayley permutations, which we
call Caylerian polynomials. In Theorem~\ref{thm_caylerian_at_two},
we show that the weak and strict $n$th Caylerian polynomials
evaluated at~$2$ count Burge matrices and binary Burge
matrices, respectively.

In Section~\ref{section_2sidedpol} we define a two-sided version
of the Caylerian polynomials. The main result of this section,
Theorem~\ref{Thm:twosided-caylerians}, highlights a surprising
link with its classical counterpart. As a consequence, we are able
to reformulate the $\gamma$-nonnegativity of the two-sided Eulerian
polynomials in terms of Burge structures.

In Section~\ref{section_comp_maps} we rephrase Stanley's notion
of $v$-compatible maps and generalize it to Cayley permutations.
In Corollary~\ref{cor_fish_basis}, we show that the Fishburn
basis~\cite{CC} of a permutation~$v$ is the set of Cayley
permutations that are compatible with~$v$.
Two equations relating the Caylerian polynomials to
Burge matrices in which empty rows are allowed are obtained
in Theorem~\ref{Thm:eqs-cayler}.

In Section~\ref{section_alphas} we study Cayley permutations
with a prescribed set of ascents and relate them, in
Theorem~\ref{Thm:prescr-ascset}, to Burge matrices
with fixed row sums.

Finally, in Section~\ref{section_final}, we suggest some directions for future work.

\section{Preliminaries}\label{section_prel}

A \emph{Cayley permutation}~\cite{Ca,CC,MF84} on $[n]$ is a mapping
$v:[n]\to[n]$ such that $\img(v)=[k]$ for some $k\leq n$.  We often
identify $v$ with the word $v=v_1\cdots v_n$, where $v_i=v(i)$. Let
$\Cay[n]$ be the set of Cayley permutations on $[n]$.
For instance,
$\Cay[1]=\{ 1\}$,
$\Cay[2]=\{ 11,12,21 \}$ and
$$
\Cay[3]=\{111,112,121,122,123,132,211,212,213,221,231,312,321\}.
$$
A \emph{ballot}, also called an ordered set partition, of $[n]$ is a list of
disjoint blocks (nonempty sets) $B_1B_2\cdots B_k$ whose union is $[n]$.
Let $\Bal[n]$ be the set of ballots of $[n]$.  There is a well-known
one-to-one correspondence between ballots $B_1B_2\cdots B_k$ and Cayley
permutations $v$ where $i\in B_{v(i)}$.  For instance,
$\{ 2,3,5\}\{ 6\}\{ 1,7\}\{ 4\}$ in $\Bal[7]$ corresponds to
$3114123$ in $\Cay[7]$. In particular, $|\Cay[n]|$ is equal to the $n$th
Fubini number (A000670 in the OEIS~\cite{Sl}).

A bijective map $v:[n]\to[n]$ is a \emph{permutation}. In other words,
a permutation is a Cayley permutation where $\img(v)=[n]$.
Let $\Sym[n]$ be the set of permutations on $[n]$ and
note that $\Sym[n]\subseteq\Cay[n]$. Denote by $\id_n$ the
identity permutation $\id_n(i)=i$.
In fact, we shall often omit the subscript in $\id_n$ and let $n$ be
inferred by context.
Moreover, let $\WI[n]$ be the subset of $\Cay[n]$ consisting of the weakly
increasing Cayley permutations:
\[
  \WI[n] = \{v\in\Cay[n]: v(1)\leq v(2)\leq \dots \leq v(n)\}.
\]
Note that $|\WI[n]|=2^{n-1}$ for each $n\ge 1$. For example,
\[
  \WI[1]=\{ 1\},\; \WI[2]=\{ 11,12\}\;\text{ and }\;
\WI[3]=\{ 111,112,122,123\}.
\]

Given a Cayley permutation $v\in\Cay[n]$, define the set of
\emph{descents} and \emph{strict descents} of $v$ as,
respectively,
\begin{align*}
  \Des(v) &= \{ i\in[n-1]: v(i)\ge v(i+1)\}; \\
  \Des^{\circ}(v) &=\{ i\in[n-1]: v(i)> v(i+1)\}.
\end{align*}
Let $\des(v)=|\Des(v)|$ and $\des^{\circ}(v)=|\Des^{\circ}(v)|$.
The sets $\Asc(v)$ and $\Asc^{\circ}(v)$ of \emph{ascents} and
\emph{strict ascents}, as well as their cardinalities $\asc(v)$
and $\asc^{\circ}(v)$, are defined analogously. To avoid confusion,
we will sometimes add the word ``weak'' to ascents and descents
to distinguish them from their strict counterpart.

The four symmetries of a rectangle act on the plot of a Cayley
permutation. They are generated by the reverse and complement
operations, which we now define. For any $v\in\Cay[n]$ and $i\in[n]$,
let the \emph{reverse} of $v$ be $v^r(i)=v(n+1-i)$, and let the
\emph{complement} of $v$ be $v^c(i)=\max(v)+1-v(i)$, where
$\max(v)=\max\{v_i: i\in [n]\}$. The following lemma is easy to prove.

\begin{lemma}\label{Lemma:Asc-Des-properties}
  For $v\in\Cay[n]$,
  \begin{gather*}
    \Des(v^c)  = \Asc(v),\quad
    \SDes(v^c) = \SAsc(v),\\
    \Asc(v^c)  = \Des(v),\quad
    \SAsc(v^c) = \SDes(v),\\
    \Des(v^r)  = n-\Asc(v),\quad
    \SDes(v^r) = n-\SAsc(v), \\
    \Asc(v^r)  = n-\Des(v),\quad
    \SAsc(v^r) = n-\SDes(v),
  \end{gather*}
  where $n-X = \{ n-x : x\in X\}$ for any set of numbers $X$.
\end{lemma}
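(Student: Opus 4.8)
The eight identities fall into two groups of four, one governed by the complement and one by the reverse. The plan is to reduce each group to a single computation: within a group, the identities involving $\SDes$ and $\SAsc$ are proved word for word like those involving $\Des$ and $\Asc$, merely replacing each weak inequality by a strict one; and the identities with $\Asc(\cdot)$ or $\SAsc(\cdot)$ on the left-hand side follow from the corresponding ones with $\Des(\cdot)$ or $\SDes(\cdot)$ on the left by substituting $v^c$ (resp.\ $v^r$) for $v$, provided we first know that complementation and reversal are involutions of $\Cay[n]$.

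So I would begin with those preliminary facts. Reversal preserves the image, so $v^r\in\Cay[n]$, and $(v^r)^r=v$ is immediate. For the complement: since $\img(v)=[k]$ we have $\min(v)=1$ and $\max(v)=k$, whence $\img(v^c)=\{k+1-j:j\in[k]\}=[k]$, so $v^c\in\Cay[n]$ and moreover $\max(v^c)=\max(v)$; it then follows that $(v^c)^c(i)=\max(v)+1-(\max(v)+1-v(i))=v(i)$.

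Next I would settle the two representative identities. For the complement: fix $i\in[n-1]$; cancelling the constant $\max(v)+1$ shows $v^c(i)\ge v^c(i+1)\iff v(i)\le v(i+1)$, that is, $i\in\Des(v^c)\iff i\in\Asc(v)$, so $\Des(v^c)=\Asc(v)$. For the reverse: fix $i\in[n-1]$ and set $j=n-i$, which runs over $[n-1]$ bijectively; then $i\in\Des(v^r)\iff v(n+1-i)\ge v(n-i)\iff v(j+1)\ge v(j)\iff j\in\Asc(v)$, giving $\Des(v^r)=n-\Asc(v)$. The remaining six identities then drop out by the reductions described above.

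The hard part will be essentially nonexistent; the only points deserving a moment's care are that complementation does land back in $\Cay[n]$ and is an involution---both hinging on $\min(v)=1$, which forces $\max(v^c)=\max(v)$---and that $i\mapsto n-i$ is a bijection of $[n-1]$, so that the index sets on the two sides of the reverse identities genuinely coincide rather than merely having the same size.
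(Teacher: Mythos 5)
Your proof is correct and complete; the paper itself gives no proof, dismissing the lemma as ``easy to prove,'' and your argument is exactly the standard one the authors surely intend: unwind the definitions for one representative identity per symmetry, note that the strict versions are verbatim repetitions with $\ge$ replaced by $>$, and derive the remaining cases by applying the involutions $v\mapsto v^c$ and $v\mapsto v^r$. Your care in checking that $v^c\in\Cay[n]$ and $(v^c)^c=v$ (both resting on $\min(v)=1$, hence $\max(v^c)=\max(v)$) and that $i\mapsto n-i$ is a bijection of $[n-1]$ is exactly the right level of rigor for the points that are not entirely automatic.
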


A \emph{Burge matrix}~\cite{CC} is a matrix with nonnegative integer entries
whose every row and column has at least one nonzero entry.
The size of a Burge matrix is the sum of its entries
and we let $\Mat[n]$ denote the set of Burge matrices of size $n$.
For instance, $\Mat[2]$ contains the five matrices
\begin{equation}\label{five-burge-matrices}
  \begin{bmatrix}
    2
  \end{bmatrix},\,
  \begin{bmatrix}
    1 & 1
  \end{bmatrix},\,
  \begin{bmatrix}
    1 \\
    1
  \end{bmatrix},\,
  \begin{bmatrix}
    1 & 0 \\
    0 & 1
  \end{bmatrix},\,
  \begin{bmatrix}
    0 & 1 \\
    1 & 0
  \end{bmatrix}.
\end{equation}
With each matrix $A=(a_{ij})$ in $\Mat[n]$ we associate a biword
$\binom{u}{v}$ of length $n$ as follows. Any column $\binom{i}{j}$
appears $a_{ij}$ times and the columns are sorted in ascending order
with respect to the top entry, breaking ties by sorting in descending
order with respect to the bottom entry.
For instance, the biword corresponding to the matrix
$$
\begin{bmatrix}
  1 & 0 & 2 \\
  0 & 2 & 1 \\
  1 & 1 & 0
\end{bmatrix}
\quad\text{is}\quad
\binom{1\;1\;1\;2\;2\;2\;3\;3}{3\;3\;1\;3\;2\;2\;2\;1}.
$$
To ease notation, we will often write the biword $\binom{u}{v}$ as
a pair $(u,v)$.

Let $\map$ be the map associating each Burge matrix $A$ with the
corresponding biword $(u,v)$, as described above, and let
$\Bur[n]=\map(\Mat[n])$. Note that the number of rows of $A$ is equal
to $\max(u)$; similarly, the number of columns of $A$ is equal to
$\max(v)$. Furthermore, the requirement that each row and column
of $A$ is not null guarantees that both $u$ and $v$ are Cayley
permutations. In particular, $u$ is a weakly increasing
Cayley permutation.
Finally, we have $\Des(u)\subseteq\Des(v)$ due to the procedure
adopted to sort the columns of $(u,v)$. Indeed, we have
$\Des(u)=\{ i: u(i)=u(i+1)\}$ and entries of $v$
that have the same top entry are sorted in weakly decreasing order.
Conversely, any such biword is associated with a
unique Burge matrix; more explicitly, the biword $(u,v)$ is associated
with the matrix $A=(a_{ij})$, of size $\max(u)\times\max(v)$,
where $a_{ij}$ is equal to the number of columns $\binom{i}{j}$
in $(u,v)$.
In fact, the set $\Bur[n]$ can be alternatively defined as
$$
\Bur[n]=
\{ (u,v)\in\WI[n]\times\Cay[n]: \Des(u)\subseteq \Des(v)\},
$$
and the map $\map$ is a bijection between $\Mat[n]$ and
$\Bur[n]$. Elements of $\Bur[n]$ are called \emph{Burge words}.  This
terminology is due to Alexandersson and Uhlin~\cite{AlUh} and the
connection to Burge is with his variant of the RSK
correspondence~\cite{Bu}.  The sequence of cardinalities
$|\Bur[n]|$ is A120733 in the OEIS~\cite{Sl}.

Given $v\in\Cay[n]$, we define
\[
\WI(v)=\{ u\in\WI[n]:\Des(u)\subseteq\Des(v)\}
\]
so that
\[
\Bur[n] = \bigcup_{v\in\Cay[n]}\WI(v)\times\{ v\},
\]
where the union is disjoint.

Let $A=(a_{ij})\in\Mat[n]$ and let $x=\map(A)$ be its
corresponding biword in $\Bur[n]$. It is
straightforward to compute the biword $x^T$ corresponding to the
transpose $A^T=(a_{ji})$ of $A$: turn each column of $x$ upside down
and then sort the columns of the resulting biword as described
previously. Following~\cite{CC}, we shall write
$$
\binom{u}{v}^T = \binom{\sort(v)}{\Gamma(u,v)},
$$
where $\sort(v)$ is obtained by sorting $v$ in weakly increasing order
and
\[
  \Gamma:\Bur[n]\to\Cay[n]
\]
is the map associating $(u,v)$ with the bottom row of $(u,v)^T$.  As a
notable instance of this construction, if $p$ is a permutation, then
$$
\binom{\id}{p}^T = \binom{\id}{p^{-1}}
\quand
\Gamma(\id,p)=p^{-1}.
$$
Now, it is clear that transposition acts as an involution on the set
of Burge matrices, and thus $\Bur[n]$ is closed under transpose too.
Indeed, we~\cite{CC} showed that the transpose operation gives an
alternative characterization of $\Bur[n]$: For any biword
$(u,v)\in\WI[n]\times\Cay[n]$,
$$
\Des(u)\subseteq\Des(v)
\quad\text{if and only if}\quad
((u,v)^T)^T=(u,v).
$$
Note that the Burge transpose extends naturally to any
biword $(u,v)$ where $u$ and $v$ are maps $[n]\to\{1,2,3,\dots\}$ such that
$u$ is weakly increasing and $\Des(u)\subseteq\Des(v)$.
To compute $(u,v)^T$ we flip $(u,v)$ upside down and sort
the columns of the resulting biword according to the same procedure as before.
It is easy to see that the equality $((u,v)^T)^T=(u,v)$
still holds. Indeed it depends solely on the procedure adopted
to sort the columns and on the initial requirement that
$\Des(u)\subseteq\Des(v)$.
The map $\Gamma$ extends in a similar fashion.
For instance,
$$
\biggl(
\binom{1\;1\;2\;4\;4\;7\;8\;8}{3\;3\;1\;5\;1\;5\;6\;1}^{\!T}\,\biggr)^{\!T} \,=\,
\binom{1\;1\;1\;3\;3\;5\;5\;6}{8\;4\;2\;1\;1\;7\;4\;8}^{\!T} \,=\,
\binom{1\;1\;2\;4\;4\;7\;8\;8}{3\;3\;1\;5\;1\;5\;6\;1}
$$
and $\Gamma(11244788,33151561)=84211748$.
Note that $\Gamma(u_1,v)\neq\Gamma(u_2,v)$
if $u_1\neq u_2$. In other words, for any fixed $v\in\Cay[n]$, the map
$u\mapsto \Gamma(u,v)$ is injective on $\WI(v)\times\{ v\}$.

\section{Binary Burge matrices}\label{section_binary}

We shall now define two subsets of $\Mat[n]$ that will be particularly relevant
in this paper. Let $\BMat[n]$ be the set of \emph{binary Burge matrices} of size $n$, that is,
matrices in $\Mat[n]$ with coefficients in $\{0,1\}$. The cardinality
of $\BMat[n]$ is given by A101370 in the OEIS~\cite{Sl}. The four biwords
corresponding to matrices in $\BMat[2]$ are
\begin{equation}\label{eq_bmats}
  \binom{11}{21},\binom{12}{12},\binom{12}{21},\binom{12}{11}.
\end{equation}
Let $\PMat[n]$ be the set of Burge matrices whose every column sums
to $1$; equivalently, whose every column contains precisely one
nonzero entry, which is equal to $1$. Clearly,
$$
\PMat[n]\subseteq \BMat[n]\subseteq \Mat[n].
$$
For instance, among the five Burge matrices of size two listed in
\eqref{five-burge-matrices}, three belong to $\PMat[2]$, namely
$$
\begin{bmatrix}
  1 & 1
\end{bmatrix},\,
\begin{bmatrix}
  1 & 0 \\
  0 & 1
\end{bmatrix}
\,\text{ and }\,
\begin{bmatrix}
  0 & 1 \\
  1 & 0
\end{bmatrix},
$$
whereas the matrix
$\bigl[\begin{smallmatrix}1 \\ 1\end{smallmatrix}\bigr]$
does not, but it is binary.

Let us define the Burge words corresponding to $\BMat[n]$ and
$\PMat[n]$:
\begin{align*}
  \BBur[n] &\,=\, \bigl\{ (u,v)\in\Bur[n]:\Des(u)\subseteq\Des^{\circ}(v) \bigr\};\\
  \PBur[n] &\,=\, \bigl\{ (u,v)\in\Bur[n]: v\in\Sym[n] \bigr\}.
\end{align*}
Let $v\in\Cay[n]$. In analogy with $\WI(v)$, we also define
\[
  \WIbin(v) = \{ u\in\WI[n]: \Des(u)\subseteq\Des^{\circ}(v)\}
\]
so that
\[
  \BBur[n] = \bigcup_{v\in\Cay[n]}\WIbin(v)\times\{ v\},
\]
where the union is disjoint. Note that
\[
  \PBur[n]\,\subseteq\,\BBur[n]\,\subseteq\,\Bur[n].
\]
Indeed, $\BBur[n]\subseteq\Bur[n]$ since $\Des^{\circ}(v)\subseteq\Des(v)$
and $\WIbin(v)\subseteq\WI(v)$. Furthermore, if $v\in\Sym[n]$,
then $\Des(v)=\Des^{\circ}(v)$, from which $\PBur[n]\subseteq\BBur[n]$
immediately follows.

\begin{proposition}\label{prop_map_subsets}
  For $n\ge 0$,
  $$
  \map(\BMat[n])=\BBur[n]
  \quand
  \map(\PMat[n])=\PBur[n].
  $$
  In particular,
  $$
  |\BMat[n]|=|\BBur[n]|
  \quand
  |\PMat[n]|=|\PBur[n]|.
  $$
\end{proposition}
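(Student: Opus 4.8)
The plan is to leverage the bijection $\map\colon\Mat[n]\to\Bur[n]$ established in Section~\ref{section_prel}, and simply track which biwords arise from the two distinguished subclasses of matrices. Since $\map$ is a bijection, it suffices to characterize $\map(\BMat[n])$ and $\map(\PMat[n])$ as subsets of $\Bur[n]$, and then match these characterizations against the definitions of $\BBur[n]$ and $\PBur[n]$. The cardinality statements then follow immediately from the fact that a bijection restricts to a bijection between any subset and its image.

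First I would handle $\PMat[n]$. Recall that a column $\binom{i}{j}$ of the biword $(u,v)=\map(A)$ appears $a_{ij}$ times. So the number of occurrences of the value $j$ in the bottom row $v$ equals the column sum $\sum_i a_{ij}$ of the $j$th column of $A$. Hence every column of $A$ sums to $1$ if and only if every value $j\in[\max(v)]$ occurs exactly once in $v$, i.e.\ $v$ is a permutation. This gives $\map(\PMat[n])=\{(u,v)\in\Bur[n]:v\in\Sym[n]\}=\PBur[n]$ directly from the definitions. The fact that $\map$ is a bijection then yields $|\PMat[n]|=|\PBur[n]|$.

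Next I would handle $\BMat[n]$. By the same column-count observation, $A$ has all entries in $\{0,1\}$ if and only if, for every pair $(i,j)$, the column $\binom{i}{j}$ appears at most once in $(u,v)$. I would translate this ``no repeated column'' condition into a descent condition on $(u,v)$. The columns of $(u,v)$ are sorted by top entry, ties broken by weakly decreasing bottom entry; so the only way to get a repeated column $\binom{i}{j}\binom{i}{j}$ is to have two adjacent positions $\ell,\ell+1$ with $u(\ell)=u(\ell+1)$ and $v(\ell)=v(\ell+1)$. Since already $\Des(u)=\{\ell:u(\ell)=u(\ell+1)\}$ (as $u$ is weakly increasing) and $(u,v)\in\Bur[n]$ forces $\Des(u)\subseteq\Des(v)$, i.e.\ $\ell\in\Des(u)\Rightarrow v(\ell)\ge v(\ell+1)$, the matrix $A$ is binary precisely when no $\ell\in\Des(u)$ has $v(\ell)=v(\ell+1)$; equivalently, every $\ell\in\Des(u)$ satisfies $v(\ell)>v(\ell+1)$, that is, $\Des(u)\subseteq\Des^{\circ}(v)$. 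This is exactly the defining condition of $\BBur[n]$, so $\map(\BMat[n])=\BBur[n]$, and bijectivity of $\map$ gives $|\BMat[n]|=|\BBur[n]|$. The base case $n=0$ is trivial, both sides being empty (or the singleton empty-matrix/empty-word, depending on convention), so the ``for $n\ge 0$'' range causes no trouble.

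The only delicate point is the ``no repeated column'' $\Leftrightarrow$ ``$\Des(u)\subseteq\Des^{\circ}(v)$'' step: I must be careful that a repeated value $\binom{i}{j}$ can only occur in \emph{consecutive} positions of the sorted biword (which it does, by the sorting rule), and that within a block of equal top entries the bottom entries are weakly decreasing, so equality $v(\ell)=v(\ell+1)$ inside such a block is the unique obstruction. I expect this to be routine once the sorting convention is unwound, so no serious obstacle is anticipated; the proof is essentially bookkeeping on top of the bijection $\map$.
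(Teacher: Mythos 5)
Your proposal is correct and follows essentially the same route as the paper: both arguments identify binary matrices with biwords having no repeated columns (and hence with the condition $\Des(u)\subseteq\Des^{\circ}(v)$, using the sorting convention and $\Des(u)\subseteq\Des(v)$), and both identify unit column sums with the bottom row being a permutation. The extra care you take about repeated columns necessarily being adjacent is a detail the paper leaves implicit, but the substance is the same.
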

\begin{proof}
  Let $A\in\Mat[n]$ and $\map(A)=(u,v)\in\Bur[n]$.  We shall start by
  proving that $\map(\BMat[n])=\BBur[n]$.  If $A$ is binary, then
  $(u,v)$ has no repeated columns. In particular, if $u(i)=u(i+1)$, then
  $v(i)\neq v(i+1)$. Moreover, $v(i)\ge v(i+1)$ due to the inequality
  $\Des(u)\subseteq\Des(v)$ defining biwords in $\Bur[n]$.  Thus
  $u(i)=u(i+1)$ implies $v(i)>v(i+1)$, and so $\Des(u)\subseteq\Des^{\circ}(v)$
  and $(u,v)\in\BBur[n]$.  Similarly, if $(u,v)\in\BBur[n]$, i.e.\
  $\Des(u)\subseteq\Des^{\circ}(v)$, then there are no repeated columns in
  $(u,v)$ and $A\in\BMat[n]$.  The equality $\map(\PMat[n])=\PBur[n]$
  follows from the following simple observation:
  Each column of $A$ sums to $1$ if and
  only if $v$ contains exactly one copy of each integer, that is,
  $v\in\Sym[n]$ is a permutation.
\end{proof}

We shall shortly define a natural set of biwords (denoted $\Omega[n]$
below) that is equinumerous with binary Burge words. While we find this
connection interesting, we should point out that the machinery we wish to
develop surrounding Cayley polynomials does not depend on it, and in
that sense the reminder of this section can be skipped.

Recall that
\begin{align*}
  \Bur[n] &= \{ (u,v)\in\WI[n]\times\Cay[n]: \Des(u)\subseteq \Des(v)\}. \\
\intertext{What happens if we reverse the set inequality and define}
  \Omega[n] &= \{ (u, v)\in\WI[n]\times\Cay[n]: \Des(u)\supseteq \Des(v) \}?
\end{align*}
For instance, we have
  \begin{align}\label{eq_omegamats}
    \Omega[2] &= \left\{
    \binom{1\,2}{1\,2},\, \binom{1\,1}{2\,1},\,
    \binom{1\,1}{1\,2},\, \binom{1\,1}{1\,1}
    \right\}.
  \end{align}
Curiously, it turns out that $\Omega[n]$ and $\BBur[n]$ are equinumerous. This
is due to a symmetry on $\WI[n]$ that we now define.
Any $u\in \WI[n]$ is determined by its descent set $\Des(u)$. Indeed,
if we know that $\Des(u)=S\subseteq[n-1]$ and $u\in\WI[n]$, with
$n\geq 1$, then $u$ is given by $u(1)=1$ and $u(i+1)=u(i)+[i\notin S]$.
Here we are using the Iverson bracket, so that $[i\notin S]$ is $1$ if
$i\notin S$ and $0$ if $i\in S$. We define $\conj{u}$ as the unique weakly
increasing Cayley permutation with descent set $[n-1]\setminus D(u)$,
and we call $\conj{u}$ the \emph{conjugate} of $u$. For instance, if
$u=12223445555$, then $\conj{u} = 11233344567$, where
$\Des(\conj{u})=\{1,2,\dots,10\}\setminus\Des(u)=\{1,4,5,7\}$.
Note that if we add the $i$th letter of $u$ with the $i$th letter of
$\conj{u}$, for $i=1$ through $i=11$, we get $2$,$3$,\dots,$12$. This
is not a coincidence. In fact, we have the following lemma, which could
serve as the definition of $\conj{u}$.

\begin{lemma}
  Let $u\in \WI[n]$. For each $i\in[n]$ we have $\conj{u}(i) = i+1-u(i)$.
\end{lemma}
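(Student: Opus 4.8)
The plan is to prove the formula $\conj{u}(i) = i+1-u(i)$ by induction on $i$, using the explicit recursive description of weakly increasing Cayley permutations in terms of their descent sets that was recalled just before the statement. Recall that any $u\in\WI[n]$ satisfies $u(1)=1$ and $u(i+1)=u(i)+[i\notin\Des(u)]$, and that $\conj{u}$ is the unique element of $\WI[n]$ with $\Des(\conj{u})=[n-1]\setminus\Des(u)$, so $\conj{u}$ obeys the same recursion with the complementary bracket: $\conj{u}(1)=1$ and $\conj{u}(i+1)=\conj{u}(i)+[i\in\Des(u)]$.

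For the base case $i=1$ both sides equal $1$, since $u(1)=1$ gives $1+1-u(1)=1=\conj{u}(1)$. For the inductive step, suppose $\conj{u}(i)=i+1-u(i)$. Then
\[
  \conj{u}(i+1)=\conj{u}(i)+[i\in\Des(u)]
  =(i+1-u(i))+[i\in\Des(u)].
\]
On the other hand, $u(i+1)=u(i)+[i\notin\Des(u)]$, so
\[
  (i+1)+1-u(i+1)=(i+2)-u(i)-[i\notin\Des(u)]
  =(i+1-u(i))+\bigl(1-[i\notin\Des(u)]\bigr).
\]
Since $1-[i\notin\Des(u)]=[i\in\Des(u)]$, the two expressions agree, completing the induction.

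There is essentially no obstacle here: the only thing to be careful about is making sure the two recursions are stated with the correct (complementary) Iverson brackets, which is immediate from the definition $\Des(\conj{u})=[n-1]\setminus\Des(u)$. One could equivalently give a one-line non-inductive argument: summing the increments, $u(i)=1+\sum_{j=1}^{i-1}[j\notin\Des(u)]$ and $\conj{u}(i)=1+\sum_{j=1}^{i-1}[j\in\Des(u)]$, so $u(i)+\conj{u}(i)=2+(i-1)=i+1$, which is the claimed identity rearranged. I would present whichever is cleaner in context; the telescoping/summation version makes transparent the observation in the preceding paragraph that the $i$th letters of $u$ and $\conj{u}$ sum to $i+1$.
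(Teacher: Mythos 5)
Your induction is correct and is essentially identical to the paper's own proof, which also verifies the base case $i=1$ and then compares the recursions $\conj{u}(i+1)=\conj{u}(i)+[i\in\Des(u)]$ and $u(i+1)=u(i)+[i\notin\Des(u)]$ via the complementary Iverson brackets. The telescoping one-liner you mention at the end is a nice equivalent packaging, but the core argument matches the paper's.
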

\begin{proof}
   Since $u(1) = \conj{u}(1) = 1$, the statement trivially holds for
   $i=1$. Let $S=\Des(u)$ and $T=\Des(\conj{u})$, so that
   $T = [n-1]\setminus S$. For $i\geq 1$ we have
   \begin{align*}
     \conj{u}(i+1)
     = \conj{u}(i) + [i\notin T]
     &= i+1-u(i) + [i\notin T] \\
     &= i+1-u(i) + (1 - [i\in T]) \\
     &= i+2-(u(i) + [i\notin S]) = i+2 - u(i+1),
   \end{align*}
   which---by induction---concludes the proof.
 \end{proof}
%

The following lemma is easy to prove.

\begin{lemma}\label{Lemma:Asc-Des-conjugation}
  For $u\in\WI[n]$ we have $\Des(\conj{u}) = \SAsc(u)$ and
  $\SAsc(\conj{u}) = \Des(u)$.
\end{lemma}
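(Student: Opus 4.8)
The plan is to exploit the explicit formula for the conjugate proved in the previous lemma, namely $\conj{u}(i) = i+1-u(i)$ for all $i\in[n]$ and $u\in\WI[n]$. Since both claimed identities are statements about subsets of $[n-1]$, it suffices to check, for each fixed $i\in[n-1]$, that $i\in\Des(\conj{u})$ if and only if $i\in\SAsc(u)$, and that $i\in\SAsc(\conj{u})$ if and only if $i\in\Des(u)$.

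For the first identity: by definition $i\in\Des(\conj{u})$ means $\conj{u}(i)\ge \conj{u}(i+1)$. Substituting the formula, this is $i+1-u(i) \ge i+2-u(i+1)$, i.e.\ $u(i+1)\ge u(i)+1$, i.e.\ $u(i+1) > u(i)$, which is exactly the condition $i\in\SAsc(u)$. For the second identity: $i\in\SAsc(\conj{u})$ means $\conj{u}(i) < \conj{u}(i+1)$, i.e.\ $i+1-u(i) < i+2-u(i+1)$, i.e.\ $u(i+1) < u(i)+1$, i.e.\ $u(i+1)\le u(i)$, which since $u$ is weakly increasing forces $u(i+1)=u(i)$; in particular $u(i)\ge u(i+1)$, so $i\in\Des(u)$. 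Conversely $i\in\Des(u)$ gives $u(i)\ge u(i+1)$, hence (again by weak monotonicity) $u(i)=u(i+1)$, and the chain of equivalences runs backwards.

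Alternatively, and perhaps more cleanly, one can observe that the second identity follows from the first by applying it to $\conj{u}$ in place of $u$, together with the involutivity $\conj{(\conj{u})}=u$ (which is immediate from $\Des(\conj{(\conj{u})}) = [n-1]\setminus\Des(\conj{u}) = [n-1]\setminus([n-1]\setminus\Des(u)) = \Des(u)$ and the fact that a weakly increasing Cayley permutation is determined by its descent set). So I would prove $\Des(\conj{u}) = \SAsc(u)$ directly as above, and then deduce $\SAsc(\conj{u}) = \SAsc(\conj{(\conj{u})}{}^{\star})\,$— more precisely, replacing $u$ by $\conj{u}$ in the first identity yields $\Des(\conj{(\conj{u})}) = \SAsc(\conj{u})$, and the left side is $\Des(u)$, giving $\SAsc(\conj{u}) = \Des(u)$.

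There is essentially no obstacle here: the content is entirely in the formula $\conj{u}(i) = i+1-u(i)$, which is already available, and the rest is a one-line unwinding of definitions together with the fact that $u\in\WI[n]$ is weakly increasing (so that the negation of a strict ascent at $i$ is a weak descent at $i$, and vice versa). The only point requiring a modicum of care is remembering that for weakly increasing $u$ the complement of $\SAsc(u)$ inside $[n-1]$ is $\Des(u)$ (not $\SDes(u)$, which is empty), and symmetrically the complement of $\Des(u)$ is $\SAsc(u)$; this is what makes the two identities "dual" to each other.
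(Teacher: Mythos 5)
Your proof is correct. The paper itself labels this lemma as ``easy to prove'' and omits the argument entirely, so there is no official proof to compare against; your route through the explicit formula $\conj{u}(i)=i+1-u(i)$ is perfectly valid. Note only that an even shorter argument is available directly from the definition of the conjugate: by construction $\Des(\conj{u})=[n-1]\setminus\Des(u)$, and for any weakly increasing $u$ each index of $[n-1]$ is either a weak descent (where $u(i)=u(i+1)$) or a strict ascent, so $[n-1]\setminus\Des(u)=\SAsc(u)$; the second identity then follows by applying the same complementation to $\conj{u}$, exactly as in the alternative derivation you sketch at the end.
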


\begin{proposition}
  Define the mapping $\theta:\BBur[n]\to \Omega[n]$ by
  $\theta(u,v) = \bigl( (u^{rc})^{\star}, v^r \bigr)$. Then $\theta$ is a
  bijection.
\end{proposition}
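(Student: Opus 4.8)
The plan is to realize $\theta$ as the restriction of a bijection of the ambient set $\WI[n]\times\Cay[n]$, and to check that this ambient bijection carries the defining condition of $\BBur[n]$ exactly to that of $\Omega[n]$. First I would observe that each coordinate operation is a bijection of the relevant set. Since the reverse of a weakly increasing word is weakly decreasing and the complement of a weakly decreasing word is weakly increasing, $u\mapsto u^{rc}$ maps $\WI[n]$ into itself; it is invertible (with inverse $u\mapsto u^{rc}$, as $r,c$ are commuting involutions), hence a bijection of $\WI[n]$. Composing it with the conjugation $u\mapsto\conj{u}$ — an involution of $\WI[n]$ by its definition — shows $u\mapsto(u^{rc})^{\star}$ is a bijection of $\WI[n]$; and $v\mapsto v^r$ is an involution of $\Cay[n]$ because reversal preserves the image set. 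Hence $\theta$ is a bijection of $\WI[n]\times\Cay[n]$ onto itself.

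It therefore suffices to prove the equivalence
\[
\Des(u)\subseteq\SDes(v)
\quad\Longleftrightarrow\quad
\Des\bigl((u^{rc})^{\star}\bigr)\supseteq\Des(v^r)
\]
for all $(u,v)\in\WI[n]\times\Cay[n]$, i.e.\ that $(u,v)\in\BBur[n]$ iff $\theta(u,v)\in\Omega[n]$. Granting this, $\theta^{-1}(\Omega[n])=\BBur[n]$, so $\theta(\BBur[n])=\Omega[n]$ and $\theta$ restricts to a bijection $\BBur[n]\to\Omega[n]$.

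To establish the equivalence I would compute the two descent sets from Lemmas~\ref{Lemma:Asc-Des-properties} and~\ref{Lemma:Asc-Des-conjugation}. Writing $u^{rc}=(u^r)^c$ and applying Lemma~\ref{Lemma:Asc-Des-properties} twice gives $\Des(u^{rc})=\Asc(u^r)=n-\Des(u)$; since $u^{rc}\in\WI[n]$, its strict ascents are the complement in $[n-1]$ of its weak descents, so Lemma~\ref{Lemma:Asc-Des-conjugation} yields $\Des\bigl((u^{rc})^{\star}\bigr)=\SAsc(u^{rc})=[n-1]\setminus(n-\Des(u))$. On the other side, Lemma~\ref{Lemma:Asc-Des-properties} gives $\Des(v^r)=n-\Asc(v)$, and since weak ascents and strict descents partition $[n-1]$ we have $\Asc(v)=[n-1]\setminus\SDes(v)$, whence $\Des(v^r)=[n-1]\setminus(n-\SDes(v))$. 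Thus $\Des((u^{rc})^{\star})\supseteq\Des(v^r)$ reads $[n-1]\setminus(n-\Des(u))\supseteq[n-1]\setminus(n-\SDes(v))$; taking complements in $[n-1]$ and using that $X\mapsto n-X$ is an inclusion-preserving bijection of the subsets of $[n-1]$, this is equivalent to $\Des(u)\subseteq\SDes(v)$, i.e.\ to $(u,v)\in\BBur[n]$. This proves the equivalence and hence the proposition.

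The only delicate part is the bookkeeping in the last paragraph: one must keep weak and strict ascents/descents apart, remembering that complementation interchanges (weak) ascents and descents while preserving the strict/weak dichotomy, conjugation interchanges strict ascents with weak descents, and each reversal contributes a reflection $X\mapsto n-X$. Once the two formulas $\Des((u^{rc})^{\star})=[n-1]\setminus(n-\Des(u))$ and $\Des(v^r)=[n-1]\setminus(n-\SDes(v))$ are in hand, the equivalence is a one-line set computation and bijectivity follows at once from the coordinatewise structure of $\theta$.
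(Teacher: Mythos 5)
Your proposal is correct and follows essentially the same route as the paper's proof: both establish bijectivity of $\theta$ from the fact that reverse, complement and conjugation are involutions, and both reduce the image condition to the equivalence $\Des(u)\subseteq\SDes(v)\iff\Des\bigl((u^{rc})^{\star}\bigr)\supseteq\Des(v^r)$ via Lemmas~\ref{Lemma:Asc-Des-properties} and~\ref{Lemma:Asc-Des-conjugation}. The only cosmetic difference is that you fold the complementation $X\mapsto[n-1]\setminus X$ into the descent-set formulas up front, whereas the paper computes $\Des\bigl((u^{rc})^{\star}\bigr)=n-\SAsc(u)$ and $\Des(v^r)=n-\Asc(v)$ and invokes the partitions $\SAsc(u)\cup\Des(u)=[n-1]=\Asc(v)\cup\SDes(v)$ at the end.
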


\begin{proof}
  Note that if $u\in \WI[n]$, then $u^{rc}\in \WI[n]$, and thus
  $(u^{rc})^{\star}$ is well defined.  Since reverse, complement and
  conjugation are involutions, it is clear that $\theta$ is
  injective. It only remains to show that the image of $\BBur[n]$ under
  $\theta$ is $\Omega[n]$. By Lemma~\ref{Lemma:Asc-Des-properties}
  and Lemma~\ref{Lemma:Asc-Des-conjugation} we have
  $$
    \Des\bigl((u^{rc})^{\star}\bigr)
    = \SAsc(u^{rc})
    = \SDes(u^r)
    = n - \SAsc(u)
  \quad\text{and}\quad
  \Des(v^r) = n - \Asc(v).
  $$
  Since $\SAsc(u)\cup\Des(u)=[n-1] = \Asc(v)\cup\SDes(v)$ it follows that
  \begin{align*}
    \theta(u,v) \in \Omega[n]
    &\iff \SAsc(u)\supseteq \Asc(v) \\
    &\iff \Des(u) \subseteq \SDes(v)
    \iff (u,v) \in \BBur[n],
  \end{align*}
  which concludes the proof.
\end{proof}

With the elements in the order they are listed in Equation~\eqref{eq_bmats}
and Equation~\eqref{eq_omegamats}, $\theta$ maps the $i$th element
of $\BBur[2]$ to the $i$th element of $\Omega[2]$.


\section{Allowing empty rows}\label{section_emptyrows}

We wish to generalize our setting by allowing Burge matrices to have
empty rows, the reason for which will become clear in the coming sections.
Let $\Genmat$ consist of all matrices with nonnegative integer entries
whose every column has at least one nonzero entry, and let $\Genmat_m$
be the set of matrices in $\Genmat$ with $m$ rows.  Also, denote by
$\Genmat[n]$ and $\Genmat_m[n]$ the corresponding sets of matrices whose
entries sum to $n$. Clearly, $\Mat[n]\subseteq\Genmat[n]$.
Matrices in $\Genmat_{m}[n]$
have been studied by Munarini, Poneti and Rinaldi~\cite{MPR} under the
name of \emph{$m$-compositions} of $n$.
It is easy to see that the procedure defining
the map $\map$ applies to matrices in $\Genmat$ as well.
However, since empty rows are now admitted, the top row of the
resulting biwords is a weakly increasing map, but not necessarily a
Cayley permutation.
For instance, the biword associated with
$$
\begin{bmatrix}
  1 & 0 & 2 \\
  0 & 0 & 0 \\
  1 & 1 & 0
\end{bmatrix}
\quad\text{is}\quad
\binom{1\;1\;1\;3\;3}{3\;3\;1\;2\;1}.
$$
Another consequence of allowing empty rows is that the correspondence
between matrices and biwords is not bijective anymore. Returning
to the matrix of the previous example, all the matrices obtained
by inserting any number of empty rows at the bottom give rise
to the same biword:
$$
\map^{-1}\binom{1\;1\;1\;3\;3}{3\;3\;1\;2\;1}=
\left\{
\begin{bmatrix}
  1 & 0 & 2 \\
  0 & 0 & 0 \\
  1 & 1 & 0
\end{bmatrix},
\begin{bmatrix}
  1 & 0 & 2 \\
  0 & 0 & 0 \\
  1 & 1 & 0 \\
  0 & 0 & 0
\end{bmatrix},
\begin{bmatrix}
  1 & 0 & 2 \\
  0 & 0 & 0 \\
  1 & 1 & 0 \\
  0 & 0 & 0 \\
  0 & 0 & 0
\end{bmatrix},\dots
\right\}.
$$
Here the ambiguity lies in that we did not specify the codomain of
top row $u$ of the biword $(u,v)=(11133,33121)$.
It is clear that, if we regard $u$ as a mapping $u:[n]\to[m]$, for a fixed
value of $m$, then there is a unique matrix $A$ with $m$ rows 
associated with $(u,v)$. Note also that $m\ge\asc(v)+1$
because of the inclusion $\Des(u)\subseteq\Des(v)$.
Define the set
$$
\WIgen_m[n]=
\{ u: [n]\to[m] \,\;\&\;\, u(1) \leq u(2) \leq\cdots\leq u(n) \}
$$
of weakly increasing maps from $[n]$ to $[m]$, and let
$$
\Genbur_{m}[n]=
\{(u,v)\in\WIgen_m[n]\times\Cay[n]: \Des(u)\subseteq\Des(v)\}.
$$

To ease notation, let $\Bur=\cup_{n\ge 0}\Bur[n]$,
$\Genbur=\cup_{n\ge 0}\Genbur[n]$,
$\Genbur_m=\cup_{n\ge 0}\Genbur_m[n]$, etc. We may also write
$(u,v)\in\WI\times\Cay$ instead of $(u,v)\in\WI[n]\times\Cay[n]$, and
$(u,v)\in\WIgen_m\times\Cay$ instead of
$(u,v)\in\WIgen_m[n]\times\Cay[n]$.  This should not lead to any
confusion since $n$ is determined by the length of $u$, or $v$, and in
all biwords $(u,v)$ considered in this paper $u$ and $v$ will be of
equal length. The correspondence between matrices and biwords described
above induces a family of bijections
$$
\map_{m}:\Genmat_{m}\to\Genbur_{m}.
$$
For instance, if we regard $u=11133$ as a map
$u:[5]\to[3]$, then
$$
\binom{1\;1\;1\;3\;3}{3\;3\;1\;2\;1}\in\Genbur_{3}
\quand
\map_3^{-1}\binom{1\;1\;1\;3\;3}{3\;3\;1\;2\;1}=
\begin{bmatrix}
  1 & 0 & 2 \\
  0 & 0 & 0 \\
  1 & 1 & 0
\end{bmatrix}\in\Genmat_{3}.
$$
On the other hand, if we regard $u$ as a map $u:[5]\to[4]$, then
$$
\binom{1\;1\;1\;3\;3}{3\;3\;1\;2\;1}\in\Genbur_{4}
\quand
\map_4^{-1}\binom{1\;1\;1\;3\;3}{3\;3\;1\;2\;1}=
\begin{bmatrix}
  1 & 0 & 2 \\
  0 & 0 & 0 \\
  1 & 1 & 0 \\
  0 & 0 & 0
\end{bmatrix}\in\Genmat_{4}.
$$
As a further illustration, the biwords $(u,v)\in\Genbur_3[2]$ in which $v=11$,
and the corresponding matrices in $\Genmat_3[2]$, are listed below:
\begin{align*}
  \binom{11}{11}\leftrightarrow
  \begin{bmatrix}2\\0\\0\end{bmatrix}
  \qquad
  \binom{12}{11}\leftrightarrow
  \begin{bmatrix}1\\1\\0\end{bmatrix}
  \qquad
  \binom{13}{11}\leftrightarrow
  \begin{bmatrix}1\\0\\1\end{bmatrix}&\\
  \binom{22}{11}\leftrightarrow
  \begin{bmatrix}0\\2\\0\end{bmatrix}
  \qquad
  \binom{23}{11}\leftrightarrow
  \begin{bmatrix}0\\1\\1\end{bmatrix}
  \qquad
  \binom{33}{11}\leftrightarrow
  \begin{bmatrix}0\\0\\2\end{bmatrix}&.
\end{align*}
We define subsets of $\Genmat$ and $\Genbur$ that are analogous to
$\BMat$, $\PMat$, $\BBur$ and $\PBur$ as follows.
Let $\BGenmat_{m}$ be the set of binary matrices in $\Genmat_{m}$
and let $\PGenmat_{m}$ be the set of matrices in $\Genmat_{m}$
where each column sums to $1$. Furthermore, let
\begin{align*}
  \BGenbur_{m} &= \{(u,v)\in\Genbur_{m}:\Des(u)\subseteq\Des^{\circ}(v)\} \\
  \shortintertext{and}
  \PGenbur_{m} &= \{(u,v)\in\Genbur_{m}:v\in\Sym\}.
\end{align*}
Clearly,
$\PGenmat_{m}\subseteq\BGenmat_{m}\subseteq\Genmat_{m}$
and
$\PGenbur_{m}\subseteq\BGenbur_{m}\subseteq\Genbur_{m}$.
Given $v\in\Cay[n]$, let
\begin{align*}
\WIgen_m(v)=\{ u\in\WIgen_m[n]: \Des(u)\subseteq\Des(v)\}
&\quand
\WIgenbin_m(v)=\{ u\in\WIgen_m[n]: \Des(u)\subseteq\Des^{\circ}(v)\}
\intertext{so that we have the disjoint unions}
\Genbur_{m}=\bigcup_{v\in\Cay}\WIgen_m(v)\times\{ v\}
&\quand
\BGenbur_{m}=\bigcup_{v\in\Cay}\WIgenbin_m(v)\times\{ v\}.
\end{align*}

The proof of the following result is akin to the proof
of Proposition~\ref{prop_map_subsets} and is omitted.

\begin{proposition}\label{prop_genmat_genbur}
  For each $m\ge 0$,
  $$
  \map_m(\BGenmat_m)=\BGenbur_m
  \quand
  \map_m(\PGenmat_m)=\PGenbur_m.
  $$
  In particular, for each $n\ge 0$,
  $$
  |\BGenmat_m[n]|=|\BGenbur_m[n]|
  \quand
  |\PGenmat_m[n]|=|\PGenbur_m[n]|.
  $$
\end{proposition}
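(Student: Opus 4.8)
The statement is that $\map_m$ restricts to bijections $\BGenmat_m \to \BGenbur_m$ and $\PGenmat_m \to \PGenbur_m$. We already know from the construction in Section~\ref{section_emptyrows} that $\map_m : \Genmat_m \to \Genbur_m$ is a bijection, so all that is needed is to verify that the defining conditions match up under this bijection. The plan is to follow the pattern of the proof of Proposition~\ref{prop_map_subsets} essentially verbatim, checking that the argument there never used the fact that the top row is a Cayley permutation (as opposed to merely a weakly increasing map $[n]\to[m]$).

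\emph{First}, for the binary case: let $A\in\Genmat_m[n]$ with $\map_m(A)=(u,v)\in\Genbur_m[n]$. As in the earlier proof, $A$ is binary if and only if the biword $(u,v)$ has no repeated columns. Since $u$ is weakly increasing, a repeated column can only occur at a position $i$ where $u(i)=u(i+1)$; and when $u(i)=u(i+1)$, the sorting rule forces $v(i)\ge v(i+1)$, so the column is repeated exactly when $v(i)=v(i+1)$. Hence $A$ is binary $\iff$ ($u(i)=u(i+1) \Rightarrow v(i)>v(i+1)$) $\iff \Des(u)\subseteq\Des^{\circ}(v) \iff (u,v)\in\BGenbur_m[n]$. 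This gives $\map_m(\BGenmat_m)=\BGenbur_m$.

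\emph{Second}, for the $\PGenmat_m$ case: a Burge-type matrix has every column summing to $1$ precisely when, for every $j$ in its column index set, the letter $j$ occurs exactly once among the bottom entries of $(u,v)$; since $v$ already takes all values in $[\max(v)]$ (it is a Cayley permutation, being the bottom row of a biword in $\Genbur_m$), this is equivalent to $v$ containing exactly one copy of each of its values, i.e.\ $v\in\Sym[n]$. Thus $\map_m(\PGenmat_m)=\PGenbur_m$. The cardinality statements then follow immediately by restricting the bijection $\map_m$. I do not anticipate any genuine obstacle here — the only thing to be careful about is that all the relevant arguments are about the \emph{columns} of the biword and the sorting rule, neither of which is affected by whether $u$ is surjective onto $[m]$; this is exactly why the authors say the proof "is akin to the proof of Proposition~\ref{prop_map_subsets} and is omitted."
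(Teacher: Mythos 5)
Your proposal is correct and follows exactly the route the paper intends: the authors omit this proof, stating only that it is akin to that of Proposition~\ref{prop_map_subsets}, and your argument is precisely that earlier proof adapted to top rows in $\WIgen_m[n]$, with the correct observation that nothing in it depends on $u$ being surjective onto $[m]$.
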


\section{Caylerian polynomials}\label{section_caylpol}

Let $n\ge 0$. The \emph{$n$th Eulerian polynomial} is
$$
A_n(t)=\sum_{v\in\Sym[n]}t^{\des(v)}.
$$

We shall study the corresponding descent polynomials over Cayley
permutations. There are two reasonable definitions: the \emph{$n$th
(weak)  Caylerian polynomial} and the \emph{$n$th strict Caylerian polynomial}
are defined, respectively, as
$$
C_n(t)=\sum_{v\in\Cay[n]}t^{\des(v)}
\quand
C^{\circ}_n(t)=\sum_{v\in\Cay[n]}t^{\des^{\circ}(v)}.
$$
It is clear (e.g.\ by Lemma~\ref{Lemma:Asc-Des-properties}) that in the
above definitions we can replace $\des(v)$ with $\asc(v)$ and
$\des^{\circ}(v)$ with $\asc^{\circ}(v)$, respectively. In other words, we have
$$
A_n(t)=\sum_{v\in\Sym[n]}t^{\asc(v)},\quad
C_n(t)=\sum_{v\in\Cay[n]}t^{\asc(v)},\quad
C^{\circ}_n(t)=\sum_{v\in\Cay[n]}t^{\asc^{\circ}(v)}.
$$
Furthermore, since $\des^{\circ}(v)=n-1-\asc(v)$ for each $v\in\Cay_n$,
the coefficients of the strict Caylerian polynomial $C^{\circ}_n(t)$ are simply
the reverse of the coefficients of $C_n(t)$:
$$
C^{\circ}_n(t)=t^{n-1}C_n\bigl(1/t\bigr).
$$
The first Caylerian polynomials are
\begin{align*}
  C_0(t)&=1;\\
  C_1(t)&=1;\\
  C_2(t)&=1+2t;\\
  C_3(t)&=1+8t+4t^2;\\
  C_4(t)&=1+24t+42t^2+8t^3;\\
  C_5(t)&=1+64t+276t^2+184t^3+16t^4.
\end{align*}
The resulting triangle of coefficients is A366173 in the OEIS~\cite{Sl}.

Exercise~133(a) of Chapter 1 in Stanley's \emph{Enumerative
Combinatorics}~\cite{St} asks for a combinatorial proof that the $n$th Eulerian
polynomial evaluated at $2$ equals the number of Cayley permutations on $[n]$.
A simple one in terms of the Burge transpose goes as follows~\cite{CC}.
For any permutation $v$, we have $|\WI(v)|=2^{\des(v)}$.
In other words, there are $2^{\des(v)}$ weakly
increasing Cayley permutations $u$ such that $(u,v)\in\PBur$.
Indeed, in order to satisfy the inclusion $\Des(u)\subseteq\Des(v)$,
if $v(i)<v(i+1)$ is an ascent, then we must have $u(i+1)=u(i)+1$.
On the other hand, if $v(i)>v(i+1)$ is a descent, then both $u(i+1)=u(i)$
and $u(i+1)=u(i)+1$ are admitted. Thus,
\begin{align*}
  A_n(2) &\,=\, \sum_{v\in\Sym[n]}2^{\des(v)} \\
         &\,=\, |\{(u,v)\in\WI[n]\times\Sym[n]:u\in\WI(v)\}| \,=\, |\PBur[n]|.
\end{align*}
Furthermore,
\begin{align*}
\PBur[n]^T
  &= \left\{(u,v)\in\WI[n]\times\Sym[n]:u\in\WI(v)\right\}^T \\
  &= \left\{(\id,x):x\in\Cay[n]\right\},
\end{align*}
and clearly the cardinality of the latter set is $|\Cay[n]|$.
The main advantage of this approach is that it works for
Cayley permutations as well. For $v\in\Cay$ there are
$2^{\des(v)}$ weakly increasing Cayley permutations $u\in\WI(v)$
such that $(u,v)\in\Bur$. Therefore,
\begin{align*}
  C_n(2) &\,=\, \sum_{v\in\Cay[n]}\!2^{\des(v)} \\
         &\,=\, \sum_{v\in\Cay[n]}\!|\WI(v)| \,=\, |\Bur[n]|.
\end{align*}

An analogous equality holds for $C^{\circ}_n(2)$. We summarize
the results obtained this way in the following theorem.

\begin{theorem}\label{thm_caylerian_at_two}
  For $n\ge 0$,
  \begin{itemize}
  \item[$(i)$] $C_n(2) = \bigl|\Bur[n]\bigr| = \bigl|\Mat[n]\bigr|;$
  \item[$(ii)$] $C^{\circ}_n(2) = \bigl|\BBur[n]\bigr| = \bigl|\BMat[n]\bigr|;$
  \item[$(iii)$] $A_n(2) = \bigl|\PBur[n]\bigr| = \bigl|\PMat[n]\bigr|.$
  \end{itemize}
\end{theorem}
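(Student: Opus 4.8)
The plan is to assemble the theorem from three ingredients, all essentially already in place: the disjoint-union descriptions of $\Bur[n]$, $\BBur[n]$ and $\PBur[n]$; the bijection $\map\colon\Mat[n]\to\Bur[n]$ together with Proposition~\ref{prop_map_subsets}; and one combinatorial count, namely that for every $v\in\Cay[n]$,
\[
  |\WI(v)| = 2^{\des(v)} \quand |\WIbin(v)| = 2^{\des^{\circ}(v)}.
\]
First I would prove this count. Recall from Section~\ref{section_binary} that the map $u\mapsto\Des(u)$ is a bijection from $\WI[n]$ onto the family of subsets of $[n-1]$; its inverse sends a set $S$ to the word defined by $u(1)=1$ and $u(i+1)=u(i)+[i\notin S]$, which is automatically a Cayley permutation because it starts at $1$ and its successive increments are $0$ or $1$. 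Restricting this bijection to the subsets of $\Des(v)$, respectively of $\Des^{\circ}(v)$, identifies $\WI(v)$ with the power set of $\Des(v)$ and $\WIbin(v)$ with the power set of $\Des^{\circ}(v)$; counting subsets yields the two displayed equalities. (The degenerate values $n=0,1$ cause no trouble here, as all sets involved are then singletons.)

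Given the count, the three ``Burge word'' equalities each follow in one line. Using the disjoint union $\Bur[n]=\bigcup_{v\in\Cay[n]}\WI(v)\times\{v\}$,
\[
  |\Bur[n]| \,=\, \sum_{v\in\Cay[n]}|\WI(v)| \,=\, \sum_{v\in\Cay[n]}2^{\des(v)} \,=\, C_n(2);
\]
the identical computation with $\WIbin$, $\Des^{\circ}$ and $\des^{\circ}$ replacing $\WI$, $\Des$ and $\des$ gives $|\BBur[n]| = C^{\circ}_n(2)$; and restricting the union to $v\in\Sym[n]$---where $\Des^{\circ}(v)=\Des(v)$---gives $|\PBur[n]|=\sum_{v\in\Sym[n]}2^{\des(v)}=A_n(2)$. (The first of these is exactly the calculation carried out just before the statement, and the other two are its strict and its permutation analogues.) The ``Burge matrix'' halves are then immediate: $|\Mat[n]|=|\Bur[n]|$ since $\map$ is a bijection, while $|\BMat[n]|=|\BBur[n]|$ and $|\PMat[n]|=|\PBur[n]|$ are precisely Proposition~\ref{prop_map_subsets}.

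I do not expect a real obstacle. The one place to be careful is the assertion that the inverse of $u\mapsto\Des(u)$ outputs genuine Cayley permutations---that is, maps with image an initial segment $[k]$---since it is this surjectivity, and not merely the injectivity of $u\mapsto\Des(u)$, that makes $|\WI(v)|$ and $|\WIbin(v)|$ exactly powers of two and hence pins $C_n(2)$, $C^{\circ}_n(2)$ and $A_n(2)$ onto the stated cardinalities rather than merely bounding them.
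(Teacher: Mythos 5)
Your proof is correct and follows essentially the same route as the paper: the key count $|\WI(v)|=2^{\des(v)}$ (with its strict analogue), the disjoint-union decompositions of $\Bur[n]$, $\BBur[n]$ and $\PBur[n]$, and the bijection $\map$ together with Proposition~\ref{prop_map_subsets} for the matrix halves. The only cosmetic difference is that you justify the power-of-two count via the global bijection $u\mapsto\Des(u)$ between $\WI[n]$ and subsets of $[n-1]$, whereas the paper argues locally that each weak descent of $v$ offers an independent binary choice for the increment of $u$; the two arguments are interchangeable.
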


Let us push this approach a bit further.
Recall from~\cite{CC} that the \emph{Fishburn basis} of $v\in\Sym$
is defined by
$$
\basis(v) = \Gamma\bigl(\WI(v)\times\{ v\}\bigr),
$$
or, equivalently,
$$
\basis(v) = \left\{ x\in\Cay:(\id,x)^T=(\sort(x),v)\right\}.
$$
Once again, it is easy to see that $|\basis(v)|=2^{\des(v)}$.
Now, for each $x\in\Cay$ there is exactly one permutation $v\in\Sym$
such that $x\in\basis(v)$. Indeed, we have
\begin{equation}\label{basis-characterization}
  x\in\basis(v)\iff
  \binom{\id}{x}^T=\binom{\sort(x)}{v}
\end{equation}
and thus $v=\Gamma(\id,x)$.
As a result, the disjoint union
$$
\Cay=\bigcup_{v\in\Sym}\basis(v)
$$
holds. In particular, since $(\id,v)\in\Bur$ and
$(\id,v)^T=(\id,v^{-1})$, the inverse permutation $v^{-1}$
belongs to the Fishburn basis of $v$.
Next we show that every Cayley permutation in $\basis(v)$ has the same
weak descent set (and thus also strict ascent set) as $v^{-1}$.

\begin{lemma}\label{lemma_basis_descents}
  Let $v$ be a permutation. For each $x\in\basis(v)$, we have $\Des(x)=\Des(v^{-1})$
  and $\Asc^{\circ}(x)=\Asc(v^{-1})$. Furthermore,
  $$
  \sum_{x\in\basis(v)}t^{\des(x)}=2^{\des(v)}t^{\des(v^{-1})}
  \quand
  \sum_{x\in\basis(v)}t^{\asc^{\circ}(x)}=2^{\des(v)}t^{\asc(v^{-1})}.
  $$
\end{lemma}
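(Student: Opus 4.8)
The plan is to work directly with the definition $\basis(v) = \Gamma\bigl(\WI(v)\times\{v\}\bigr)$ and to track what the Burge transpose does to descent sets. Fix a permutation $v\in\Sym[n]$ and let $x = \Gamma(u,v)$ for some $u\in\WI(v)$, so that by definition $\binom{\id}{x}^T = \binom{\sort(x)}{v}$ (using the characterization in~\eqref{basis-characterization}) and also $\binom{u}{v}^T = \binom{\sort(v)}{x} = \binom{\id}{x}$, since $v$ is a permutation. The first main step is to pin down $\Des(x)$. Because $(u,v)\in\Bur$ we have $\Des(u)\subseteq\Des(v)$; transposing, $\binom{u}{v}^T = \binom{\id}{x}$, and the descents of the top row of a Burge word are exactly its constant steps, so $\Des(\id)=\varnothing\subseteq\Des(x)$ gives no information directly. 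Instead I would use the reverse direction: $\binom{\id}{x}^T = \binom{\sort(x)}{v}$, so the columns of $(\id,x)$ with equal bottom entry are sorted in weakly decreasing top order — but the top is $\id$, so in fact each value appears once, i.e. nothing to sort; the content is that the positions are recorded in $v$. Concretely, $v = \Gamma(\id,x)$ means $v$ is obtained by: flip $\binom{\id}{x}$ to $\binom{x}{\id}$, sort columns by top entry (breaking ties by decreasing bottom). So $v(j)$ lists, for each value $j$ occurring in $x$, the positions where $x$ takes a value $\le$ something — more precisely, reading the sorted biword, $v$ records the positions $i$ with $x(i)=1$ first (in decreasing order), then positions with $x(i)=2$, etc. From this explicit description one reads off that $i\in\Des(v^{-1})$ — i.e. $v^{-1}(i) > v^{-1}(i+1)$ — precisely when $i$ and $i+1$ are positions in $x$ with $x$ taking a strictly larger (or any?) value at... this is exactly where the bookkeeping happens.

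The cleaner route, and the one I would actually carry out, is to use the already-established fact $(\id,v)^T = (\id, v^{-1})$ together with the observation that $\Gamma(u,v)$ for varying $u\in\WI(v)$ differs from $v^{-1}=\Gamma(\id,v)$ only in how "ties" among equal values of the top row $u$ get distributed. Write $u\in\WI(v)$; the constancy set of $u$ is $\Des(u)\subseteq\Des(v)$. When we transpose $\binom{u}{v}$, positions $i$ where $u$ is constant (so $i\in\Des(v)$, meaning $v(i)>v(i+1)$) correspond, after the flip-and-sort, to the only freedom: whether the column $\binom{v(i)}{u(i)}$ lands before or after $\binom{v(i+1)}{u(i)}$ — but since $v(i)>v(i+1)$ and we sort bottom-ascending then top-descending, these are already forced. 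So in fact $\Gamma(u,v)$ has the same underlying set-partition-into-level-sets structure as $v^{-1}$, and the key claim is: the partition of $[n]$ into level sets $\{i : x(i)=k\}$ is the same for every $x\in\basis(v)$, only the values $k$ labeling the blocks change (they get merged when passing to larger $u$). Since $\Des(x)$ and $\Asc^{\circ}(x)$ depend only on which consecutive pairs $i,i+1$ lie in the same level set versus different ones, and "same level set" is a block-partition invariant, we get $\Des(x)=\Des(v^{-1})$ and $\Asc^{\circ}(x)=\Asc(v^{-1})$ for all $x\in\basis(v)$.

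To make the "same level-set partition" claim precise I would argue as follows: for $x=\Gamma(u,v)$, consecutive positions $i,i+1$ satisfy $x(i)\ge x(i+1)$ (a weak descent of $x$) iff, in the transposed biword $\binom{u}{v}^T=\binom{\id}{x}$, reading columns left to right, position $i$ appears after position $i+1$ is... no — I would instead directly compute: $i\in\Des(x)$ iff $x(i)=x(i+1)$ or $x(i)>x(i+1)$, and since $x=\Gamma(u,v)$ and $v$ is a permutation, one shows $x$ never has a strict descent when the corresponding columns came from... Actually the cleanest statement: $\Asc^{\circ}(x) = \{i : i+1 \text{ follows } i \text{ in the same or an earlier block}\}$, which matches $\Asc(v^{-1})$. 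The main obstacle is precisely this bookkeeping — correctly translating the column-sorting convention of the Burge transpose into a statement about when consecutive positions in $x$ form an ascent, a strict ascent, or a weak descent, and verifying it is insensitive to the choice of $u\in\WI(v)$. Once $\Des(x)=\Des(v^{-1})$ is established, the two summation identities follow immediately: $|\basis(v)|=2^{\des(v)}$ is already noted, every $x\in\basis(v)$ contributes $t^{\des(v^{-1})}$ to the first sum, and since $\asc^{\circ}(x) = n-1-\des(x) = n-1-\des(v^{-1}) = \asc(v^{-1})$ for permutations, it contributes $t^{\asc(v^{-1})}$ to the second.
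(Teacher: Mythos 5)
There is a genuine gap: the central claim $\Des(x)=\Des(v^{-1})$ is never actually established. You sketch three different routes and abandon each one, explicitly flagging the ``bookkeeping'' as the main obstacle, and the route you commit to most firmly rests on a false statement. The level-set partition of $x$ is \emph{not} the same for every $x\in\basis(v)$: for $v=21$ we have $\WI(v)=\{11,12\}$ and $\basis(v)=\{11,21\}$, whose level-set partitions are $\bigl\{\{1,2\}\bigr\}$ versus $\bigl\{\{1\},\{2\}\bigr\}$. Nor is $\Des(x)$ determined by that partition alone, since $\Des(x)$ contains the strict descents of $x$ as well as the ties. What actually happens is subtler: coarsening $u$ can turn a strict descent of $x$ into a tie (both still count as weak descents, which is why $\Des(x)$ is invariant even though the partition is not), while the hypothesis $\Des(u)\subseteq\Des(v)$ is exactly what prevents a strict ascent of $v^{-1}$ from ever collapsing to a tie. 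Your final step---deducing the two sums from $|\basis(v)|=2^{\des(v)}$ and $\asc^{\circ}(x)=n-1-\des(x)$---is fine and matches the paper, but it has nothing to stand on until the descent-set identity is proved.

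The missing computation can be closed in either of two ways. The paper's way: write $(\id,x)^T=(\sort(x),v)$ and observe that the column $\binom{x_{i+1}}{i+1}$ is placed before the column $\binom{x_i}{i}$ precisely when $x_{i+1}<x_i$, or when $x_{i+1}=x_i$ and the tie is broken by the descending-bottom rule (as $i+1>i$); that is, precisely when $i\in\Des(x)$. Since ``$i+1$ precedes $i$ in $v$'' means $v^{-1}(i+1)<v^{-1}(i)$, i.e.\ $i\in\Des(v^{-1})$, the identity follows. Alternatively, since $v$ is a permutation one has $x=\Gamma(u,v)=u\circ v^{-1}$; then $i\in\Des(v^{-1})$ forces $x_i\ge x_{i+1}$ because $u$ is weakly increasing, while $i\notin\Des(v^{-1})$ forces $x_i<x_{i+1}$ because $v$ must have a strict ascent at some position between $v^{-1}(i)$ and $v^{-1}(i+1)$, and $\Des(u)\subseteq\Des(v)$ makes $u$ strictly increase there too. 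Either argument completes the proof; as written, your proposal does not.
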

\begin{proof}
  Let $v\in\Sym[n]$, $x\in\basis(v)$ and $u=\sort(x)$, so that
  $(\id,x)^T=(u,v)$. For $i\in[n-1]$, the columns
  $$
  \binom{i}{x_i}\quand\binom{i+1}{x_{i+1}}
  \quad\text{in}\quad\binom{\id}{x}
  $$
  are mapped via the Burge transpose to the columns
  $$
  \binom{x_i}{i}\quand\binom{x_{i+1}}{i+1}
  \quad\text{in}\quad\binom{u}{v}.
  $$
  Because of how the columns of $(\id,x)^T$ are sorted via the Burge
  transpose, $i\in\Des(x)$ if and only if $i+1$ precedes $i$ in $v$. In
  particular, if $i\in\Des(x)$ and $x_i=x_{i+1}$, then $i+1$ precedes $i$
  since the corresponding top entries $x_{i+1}$ and $x_i$ are tied and
  $i+1>i$.  We leave the remaining details to the reader.  Furthermore,
  $i+1$ precedes $i$ in $v$ if and only if $i$ is a descent in $v^{-1}$,
  from which $\Des(x)=\Des(v^{-1})$ follows.  Thus,
  \begin{align*}
  \Asc^{\circ}(x)
    &= [n-1]\setminus\Des(x) \\
    &= [n-1]\setminus\Des(v^{-1}) = \Asc(v^{-1}).
  \end{align*}
  To prove the remaining identities, recall that $v^{-1}\in\basis(v)$
  and $|\basis(v)|=2^{\des(v)}$. Thus,
  \begin{align*}
    \sum_{x\in\basis(v)}t^{\des(x)} &=
    \sum_{x\in\basis(v)}t^{\des(v^{-1})} = 2^{\des(v)}t^{\des(v^{-1})}.
  \end{align*}
  The second identity follows in a similar manner.
\end{proof}

Recall that $\Cay[n]$ is the disjoint union $\Cay[n]=\bigcup_{v\in\Sym[n]}\basis(v)$.
By Lemma~\ref{lemma_basis_descents} we now have
$$
  C_n(t)
  = \sum_{v\in\Sym[n]}\sum_{x\in\basis(v)}t^{\des(x)}=
    \sum_{v\in\Sym[n]}2^{\des(v)}t^{\des(v^{-1})}
$$
and a similar formula holds for the strict Caylerian polynomials.
Let us summarize this in a theorem:

\begin{theorem}\label{Thm:C-and-Cprime}
  For $n\ge 0$,
  $$
  C_n(t)=\sum_{v\in\Sym[n]}2^{\des(v)}t^{\des(v^{-1})}
  \quand
  C^{\circ}_n(t)=\sum_{v\in\Sym[n]}2^{\des(v)}t^{\asc(v^{-1})}.
  $$
\end{theorem}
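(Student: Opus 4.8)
The plan is to assemble the identity directly from the disjoint union $\Cay[n]=\bigcup_{v\in\Sym[n]}\basis(v)$ recorded above, together with Lemma~\ref{lemma_basis_descents}. First I would split the sum defining $C_n(t)$ over the blocks of this partition:
$$
C_n(t)=\sum_{v\in\Cay[n]}t^{\des(v)}=\sum_{v\in\Sym[n]}\;\sum_{x\in\basis(v)}t^{\des(x)}.
$$
By the first displayed identity in Lemma~\ref{lemma_basis_descents}, the inner sum equals $2^{\des(v)}t^{\des(v^{-1})}$, which yields the claimed formula for $C_n(t)$ at once.

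For the strict Caylerian polynomial I would start instead from the identity $C^{\circ}_n(t)=\sum_{v\in\Cay[n]}t^{\asc^{\circ}(v)}$ noted just after the definition of $C^{\circ}_n$, split it over the same partition, and apply the second identity of Lemma~\ref{lemma_basis_descents}, namely $\sum_{x\in\basis(v)}t^{\asc^{\circ}(x)}=2^{\des(v)}t^{\asc(v^{-1})}$, to obtain $C^{\circ}_n(t)=\sum_{v\in\Sym[n]}2^{\des(v)}t^{\asc(v^{-1})}$.

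Since every ingredient---the partition of $\Cay[n]$ by Fishburn bases, the count $|\basis(v)|=2^{\des(v)}$, and the fact that all elements of $\basis(v)$ carry the descent set of $v^{-1}$---is already established, there is no genuine obstacle here; the only point needing care is matching the correct statistic ($\des$ versus $\asc^{\circ}$) with the correct half of Lemma~\ref{lemma_basis_descents}. As a quick consistency check, summing $2^{\des(v)}t^{\des(v^{-1})}$ over $\Sym[3]=\{123,132,213,231,312,321\}$ gives contributions $1,2t,2t,2t,2t,4t^2$, totalling $1+8t+4t^2=C_3(t)$, in agreement with the list of small Caylerian polynomials.
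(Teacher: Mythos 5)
Your proof is correct and is essentially identical to the paper's: both decompose $\Cay[n]$ as the disjoint union of Fishburn bases $\basis(v)$ over $v\in\Sym[n]$ and then apply the two displayed identities of Lemma~\ref{lemma_basis_descents} to the inner sums. No issues.
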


\section{Two-sided Caylerian polynomials}\label{section_2sidedpol}

The $n$th \emph{two-sided Eulerian polynomial}~\cite{CRS,Pe} is
\[
  \twoeul_n(s,t)=\sum_{v\in\Sym[n]}s^{\des(v)}t^{\des(v^{-1})}.
\]
Note that the identity for the weak Caylerian polynomial $C_n(t)$ in
Theorem~\ref{Thm:C-and-Cprime} can be expressed in terms of these
polynomials as $C_n(t)=\twoeul_n(2,t)$.
We define a strict version of the two-sided Eulerian polynomials by
\[
  \twoeul^{\circ}_n(s,t)=\sum_{v\in\Sym[n]}s^{\des(v)}t^{\asc(v^{-1})},
\]
so that $C^{\circ}_n(t)=\twoeul^{\circ}_n(2,t)$.  The (single-sided) Eulerian
polynomials are related to Cayley permutations by
\begin{equation}\label{eulerian-cay}
  A_n(1+t)  = \sum_{v\in\Cay[n]}t^{n-\max(v)}
\end{equation}
and
\begin{equation}\label{eulerian-egf}
  \sum_{n\ge 0}A_n(1+t)\frac{x^n}{n!} = \frac{t}{1+t-\exp(tx)}.
\end{equation}
For an elegant proof of Equations~\eqref{eulerian-cay} and~\eqref{eulerian-egf}
using weighted $\mathbb{L}$-species see Exercise~(1a) of Section~5.1 in the
book by Bergeron, Labelle and Leroux~\cite{BLL}.
We wish to derive analogous results for descent polynomials over Cayley
permutations and Burge structures. To this end, we define the \emph{weak} and
\emph{strict two-sided Caylerian polynomials} by
\begin{align*}
  \twocay_n(s,t) &= \sum_{(u,v)\in\Bur[n]}s^{n-\max(u)}t^{n-\max(v)}; \\
  \twocay^{\circ}_n(s,t) &= \sum_{(u,v)\in\BBur[n]}s^{n-\max(u)}t^{n-\max(v)}.
\end{align*}
Equivalently---in terms of Burge matrices---we have
\begin{align*}
  \twocay_n(s,t) &= \sum_{A\in\Mat[n]}s^{n-\rows(u)}t^{n-\cols(v)}; \\
  \twocay^{\circ}_n(s,t) &= \sum_{A\in\BMat[n]}s^{n-\rows(u)}t^{n-\cols(v)},
\end{align*}
where $\rows(A)$ and $\cols(A)$ denote the number of rows and columns
of $A$, respectively.
The polynomial $\twocay^{\circ}_n(s,t)$ has been studied by Riordan
and Stein~\cite{RS} in terms of arrangements on chessboards (where each line
contains at least one piece).

\begin{lemma}\label{lemma_oneplust_des}
  For $v\in\Cay[n]$,
  \begin{align*}
    (1+t)^{\des(v)} &= \sum_{u\in\WI(v)}t^{n-\max(u)}
    \shortintertext{and}
    (1+t)^{\des^{\circ}(v)} &= \sum_{u\in\WIbin(v)}t^{n-\max(u)}.
  \end{align*}
\end{lemma}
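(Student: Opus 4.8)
The plan is to reduce both identities to the elementary fact that $\sum_{S\subseteq T}t^{|S|}=(1+t)^{|T|}$ for any finite set $T$. The bridge between the left- and right-hand sides is the observation that, for a weakly increasing Cayley permutation $u\in\WI[n]$, the exponent $n-\max(u)$ occurring in the sums equals $\des(u)$.

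To see this, recall from Section~\ref{section_binary} that any $u\in\WI[n]$ is determined by its descent set via $u(1)=1$ and $u(i+1)=u(i)+[i\notin\Des(u)]$. Telescoping from $i=1$ to $i=n-1$ yields $\max(u)=u(n)=1+(n-1)-|\Des(u)|=n-\des(u)$, hence $n-\max(u)=\des(u)$. (If $n=0$ both displayed identities read $1=1$, so we may assume $n\ge1$.)

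Now combine this with the bijection $u\mapsto\Des(u)$ from $\WI[n]$ onto the subsets of $[n-1]$ and the definition $\WI(v)=\{u\in\WI[n]:\Des(u)\subseteq\Des(v)\}$ to get
\[
\sum_{u\in\WI(v)}t^{n-\max(u)}
=\sum_{u\in\WI(v)}t^{\des(u)}
=\sum_{S\subseteq\Des(v)}t^{|S|}
=(1+t)^{\des(v)}.
\]
The strict version follows from the identical computation, with $\WI(v)$ replaced by $\WIbin(v)=\{u\in\WI[n]:\Des(u)\subseteq\Des^{\circ}(v)\}$ and $\Des(v)$ replaced by $\Des^{\circ}(v)$, using $|\Des^{\circ}(v)|=\des^{\circ}(v)$. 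There is no real obstacle here; the only point needing care is that the identity $n-\max(u)=\des(u)$ must be established for all of $\WI[n]$, since it is exactly this that converts the weight $t^{n-\max(u)}$ into $t^{\des(u)}$ and legitimises the reindexing over subsets in the second step.
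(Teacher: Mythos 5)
Your proposal is correct and is essentially the paper's own argument in a lightly repackaged form: the paper marks each descent of $v$ with a binary choice $u(i+1)\in\{u(i),u(i)+1\}$ weighted $t$ or $1$ and uses $n=\max(u)+|\{i:u(i)=u(i+1)\}|$, which is exactly your identity $n-\max(u)=\des(u)$ together with your bijection $u\mapsto\Des(u)$ onto the subsets of $\Des(v)$. Both proofs are sound and rest on the same two observations, so there is nothing further to reconcile.
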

\begin{proof}
  Let $v\in\Cay[n]$. Also, let $u\in\WI(v)$ and recall that by
  definition we then have $\Des(u)\subseteq\Des(v)$.  If $v(i)<v(i+1)$
  is a strict ascent, then $u(i+1)=u(i)+1$.  Otherwise, if
  $v(i)\ge v(i+1)$ is a weak descent, then we have either $u(i+1)=u(i)$
  or $u(i+1)=u(i)+1$.  Let us mark every such entry $u(i+1)$ with $t$
  if $u(i+1)=u(i)$, and with $1$ if $u(i+1)=u(i)+1$. From
  $$
  n=\max(u)+\{ i\in[n-1]: u(i)=u(i+1)\}
  $$
  it follows that
  $n-\max(u)$ is equal to the number of entries of $u$ marked with $t$.
  By summing the contribution $t^{n-\max(u)}$ of each weakly
  increasing Cayley permutation $u\in\WI(v)$ we obtain $(1+t)^{\des(v)}$.
  The second identity is proved in a similar manner.
\end{proof}
\begin{lemma}\label{lemma_basis_or_wi}
  For $v\in\Sym[n]$,
  $$
  \sum_{x\in\basis(v)}t^{n-\max(x)}=\sum_{u\in\WI(v)}t^{n-\max(u)}.
  $$
\end{lemma}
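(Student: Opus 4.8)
The plan is to prove the identity by re-indexing the left-hand sum via the Fishburn-basis correspondence and checking that this correspondence preserves the relevant statistic. Recall that $\basis(v) = \Gamma\bigl(\WI(v)\times\{v\}\bigr)$ by definition, and that we observed earlier that for a fixed Cayley permutation $v$ the map $u\mapsto\Gamma(u,v)$ is injective on $\WI(v)\times\{v\}$. Hence $u\mapsto\Gamma(u,v)$ is a bijection from $\WI(v)$ onto $\basis(v)$. (Here we use that $v\in\Sym[n]$ only to the extent that $u\in\WI(v)$ forces $(u,v)\in\Bur[n]$, since $\Des(u)\subseteq\Des(v)$; this is automatic.) So the left-hand side may be rewritten as $\sum_{u\in\WI(v)}t^{n-\max(\Gamma(u,v))}$, and it remains to show $\max(\Gamma(u,v))=\max(u)$ for every $u\in\WI(v)$.

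For the latter, I would argue directly from the description of the Burge transpose. By definition $\Gamma(u,v)$ is the bottom row of $(u,v)^T$, namely $\binom{u}{v}^T=\binom{\sort(v)}{\Gamma(u,v)}$, and $(u,v)^T$ is obtained from $(u,v)$ by flipping every column upside down and then re-sorting the columns. Both operations only rearrange columns, so the multiset of entries appearing in the bottom row of $(u,v)^T$ equals the multiset of entries appearing in the top row of $(u,v)$, that is, $\{u(1),\dots,u(n)\}$. In particular $\Gamma(u,v)$ is a rearrangement of the letters of $u$, whence $\max(\Gamma(u,v))=\max(u)$.

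Combining the two observations yields
\[
  \sum_{x\in\basis(v)}t^{n-\max(x)}
  = \sum_{u\in\WI(v)}t^{n-\max(\Gamma(u,v))}
  = \sum_{u\in\WI(v)}t^{n-\max(u)},
\]
as desired. There is no serious obstacle here: the whole content is the bijectivity of $u\mapsto\Gamma(u,v)$ on $\WI(v)$ (already available) together with the statistic identity $\max\circ\,\Gamma(\cdot,v)=\max$, and the only point requiring a moment of care is spelling out why the Burge transpose leaves the multiset of top-row entries unchanged up to moving it to the bottom row — which is immediate from the sorting recipe.
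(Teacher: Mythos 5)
Your proof is correct and is essentially the paper's own argument: both exploit the bijection between $\WI(v)$ and $\basis(v)$ given by the Burge transpose (you phrase it as $u\mapsto\Gamma(u,v)$, the paper as $x\mapsto\sort(x)$, which are the two directions of the same correspondence) together with the observation that the transpose only moves the multiset of top-row letters to the bottom row, so $\max$ is preserved. No gaps.
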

\begin{proof}
  Let $v\in\Sym[n]$. Recall the characterization of the Fishburn basis
  given in equation~\eqref{basis-characterization}:
  $x\in\basis(v)\Longleftrightarrow (\id,x)^T=(\sort(x),v)$. In particular, the Burge
  transpose bijectively maps
  $$
  \left\lbrace\binom{\id}{x}:x\in\basis(v)\right\rbrace
  \quad\text{to}\quad
  \left\lbrace\binom{u}{v}:u\in\WI(v)\right\rbrace.
  $$
  Since $\max(x)=\max\bigl(\sort(x)\bigr)$, the claimed identity
  immediately follows.
\end{proof}

Let us show how equations~\eqref{eulerian-cay} and~\eqref{eulerian-egf} can be derived
from Lemmas~\ref{lemma_oneplust_des} and~\ref{lemma_basis_or_wi}.
First, equation~\eqref{eulerian-cay}:
\begin{align*}
A_n(1+t)&=\sum_{v\in\Sym[n]}(1+t)^{\des(v)}\\
&=\sum_{v\in\Sym[n]}\sum_{u\in\WI(v)}t^{n-\max(u)} &
\text{(by Lemma~\ref{lemma_oneplust_des})}\\
&=\sum_{v\in\Sym[n]}\sum_{x\in\basis(v)}t^{n-\max(x)} &
\text{(by Lemma~\ref{lemma_basis_or_wi})}\\
&=\sum_{x\in\Cay[n]}t^{n-\max(x)},
\end{align*}
where the last equality follows from
$\Cay[n]=\bigcup_{v\in\Sym[n]}\basis(v)$.
Second, equation~\eqref{eulerian-egf}:
\begin{align*}
\sum_{n\ge 0}A_n(1+t)\frac{x^n}{n!}&=
\sum_{n\ge 0}\sum_{v\in\Sym[n]}(1+t)^{\des(p)}\frac{x^n}{n!}\\
&=\sum_{n\ge 0}\sum_{x\in\Cay[n]}t^{n-\max(x)}\frac{x^n}{n!}\\
&=\frac{t}{1+t-\exp(tx)},
\end{align*}
where the last expression is the exponential generating function
of weighted ballots (i.e.\ Cayley permutations) with weight $n$ minus the
number of blocks (i.e.\ the maximum value). 

\begin{proposition}\label{prop_oneplus_square}
  For $p\in\Sym[n]$,
  $$
  (1+s)^{\des(p)}(1+t)^{\des(p^{-1})}=
  \sum_{x\in\basis(p^{-1})}\sum_{u\in\WI(x)}s^{n-\max(u)}t^{n-\max(x)}.
  $$
\end{proposition}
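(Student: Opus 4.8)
The plan is to assemble the statement from the three lemmas that immediately precede it, peeling off one factor per step. Write the right-hand side as the nested sum $\sum_{x\in\basis(p^{-1})}\bigl(\sum_{u\in\WI(x)}s^{n-\max(u)}\bigr)t^{n-\max(x)}$ and process it from the inside out.

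First I would dispatch the inner sum. Fix $x\in\basis(p^{-1})$; this is an element of $\Cay[n]$, so Lemma~\ref{lemma_oneplust_des} applies to it (with the indeterminate $s$ in place of $t$) and gives $\sum_{u\in\WI(x)}s^{n-\max(u)}=(1+s)^{\des(x)}$. Hence the right-hand side equals $\sum_{x\in\basis(p^{-1})}(1+s)^{\des(x)}\,t^{n-\max(x)}$. Next I would make the exponent $\des(x)$ constant over the index set. Since $p^{-1}\in\Sym[n]$ is a genuine permutation, Lemma~\ref{lemma_basis_descents} applies with $v=p^{-1}$ and yields $\Des(x)=\Des\bigl((p^{-1})^{-1}\bigr)=\Des(p)$ for every $x\in\basis(p^{-1})$, so $\des(x)=\des(p)$ throughout. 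Pulling this factor out, the right-hand side becomes $(1+s)^{\des(p)}\sum_{x\in\basis(p^{-1})}t^{n-\max(x)}$.

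Finally I would evaluate the remaining sum. Applying Lemma~\ref{lemma_basis_or_wi} with $v=p^{-1}$ converts $\sum_{x\in\basis(p^{-1})}t^{n-\max(x)}$ into $\sum_{u\in\WI(p^{-1})}t^{n-\max(u)}$, and a second use of Lemma~\ref{lemma_oneplust_des}, this time with $v=p^{-1}$, rewrites that as $(1+t)^{\des(p^{-1})}$. Stringing the three steps together produces $(1+s)^{\des(p)}(1+t)^{\des(p^{-1})}$, which is the claim.

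There is no genuinely hard step here: the proposition is essentially a bookkeeping identity obtained by composing results already in hand. The only thing demanding care is keeping track of which hypothesis each lemma needs. Lemma~\ref{lemma_oneplust_des} requires only a Cayley permutation, so it may be invoked both for the arbitrary $x$ appearing in the basis and for $p^{-1}$; by contrast Lemmas~\ref{lemma_basis_descents} and~\ref{lemma_basis_or_wi} require an honest permutation, which is exactly why it is crucial that the outer sum ranges over $\basis(p^{-1})$ with $p^{-1}\in\Sym[n]$. One could instead bypass Lemma~\ref{lemma_basis_or_wi} by transporting the sum through the Burge transpose bijection $(\id,x)^T=(\sort(x),p^{-1})$ directly, but routing through the stated lemmas keeps the argument shortest.
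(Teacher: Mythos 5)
Your proof is correct and follows essentially the same route as the paper: it combines Lemma~\ref{lemma_oneplust_des}, Lemma~\ref{lemma_basis_descents} and Lemma~\ref{lemma_basis_or_wi} in the same way, the only (cosmetic) difference being that you evaluate the inner sum as $(1+s)^{\des(x)}$ first and then use the constancy of $\Des(x)$ on $\basis(p^{-1})$, whereas the paper first replaces $\WI(x)$ by $\WI(p)$ and then evaluates. Your bookkeeping of which lemmas need $p^{-1}\in\Sym[n]$ versus merely $x\in\Cay[n]$ is accurate.
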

\begin{proof}
  Recall that $p\in\basis(p^{-1})$. By Lemma~\ref{lemma_basis_descents},
  each Cayley permutation $x\in\basis(p^{-1})$ has the same weak descent set
  $\Des(x)=\Des(p)$ as $p$. In particular, $\WI(x)=\WI(p)$ for each
  $x\in\basis(p^{-1})$. Thus
  \begin{align*}
    \sum_{x\in\basis(p^{-1})}\sum_{u\in\WI(x)}s^{n-\max(u)}t^{n-\max(x)}&=
    \sum_{x\in\basis(p^{-1})}t^{n-\max(x)}\left(\sum_{u\in\WI(p)}s^{n-\max(u)}\right)\\
    &=\sum_{x\in\basis(p^{-1})}t^{n-\max(x)}(1+s)^{\des(p)}\\
    &=(1+s)^{\des(p)}\sum_{v\in\WI(p^{-1})}t^{n-\max(v)}\\
    &=(1+s)^{\des(p)}(1+t)^{\des(p^{-1})},
  \end{align*}
  where the second and the last equalities follow by
  Lemma~\ref{lemma_oneplust_des} and the penultimate equality follows
  from Lemma~\ref{lemma_basis_or_wi}.
\end{proof}

To illustrate the previous result, let $p=2413$ and $q=p^{-1}=3142$.
Then $\des(p)=1$, $\des(q)=2$, and
\begin{align*}
  \WI(q) &= \{ 1122,1233,1123,1234\}; \\
  \basis(q) &= \Gamma\left(\WI(q)\times\{ q\}\right) = \{ 1212,1312,2313,2413\}.
\end{align*}
Note that $p\in\basis(q)$ and $\Des(x)=\Des(p)$ for each $x\in\basis(q)$;
in particular, $\WI(x)=\WI(p)$ for each $x\in\basis(q)$.
Now, we have $\WI(p)=\{ 1223,1234\}$ and
$$
\sum_{u\in\WI(p)}s^{n-\max(u)}=s+1=(1+s)^{\des(p)},
$$
as claimed in Lemma~\ref{lemma_oneplust_des}. Referring to the same lemma,
$u=1223$ is obtained by letting $u(3)=u(2)$, and it thus contributes with $s$
to the summand. On the other hand, the contribution of $u=1234$ is $1$
since $u$ is obtained by letting $u(3)=u(2)+1$. Finally,
\begin{align*}
  \sum_{x\in\basis(q)}\sum_{u\in\WI(x)}s^{n-\max(u)}t^{n-\max(x)}
  &=(1+s)^{\des(p)}\sum_{x\in\basis(q)}t^{n-\max(x)}\\
  &=(1+s)^{\des(p)}(t^2+2t+1)\\
  &=(1+s)^{\des(p)}(1+t)^2\\
  &=(1+s)^{\des(p)}(1+t)^{\des(q)},
\end{align*}
as claimed in Proposition~\ref{prop_oneplus_square}.

\begin{theorem}\label{Thm:twosided-caylerians}
  For $n\ge 0$,
  $$
  \twoeul_n(1+s,1+t)=\twocay_n(s,t)
  \quand
  \twoeul^{\circ}_n(1+s,1+t)=\twocay^{\circ}_n(s,t).
  $$
\end{theorem}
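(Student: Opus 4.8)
The plan is to prove the two identities in parallel, deriving them from the machinery already developed — in particular Proposition~\ref{prop_oneplus_square} and Lemma~\ref{lemma_oneplust_des} — together with the decomposition $\Cay[n]=\bigcup_{v\in\Sym[n]}\basis(v)$. The starting point is the combinatorial definition $\twoeul_n(s,t)=\sum_{p\in\Sym[n]}s^{\des(p)}t^{\des(p^{-1})}$; substituting $s\mapsto 1+s$, $t\mapsto 1+t$ turns each summand into $(1+s)^{\des(p)}(1+t)^{\des(p^{-1})}$, which Proposition~\ref{prop_oneplus_square} rewrites as a double sum over $x\in\basis(p^{-1})$ and $u\in\WI(x)$ of $s^{n-\max(u)}t^{n-\max(x)}$. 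As $p$ ranges over $\Sym[n]$, so does $p^{-1}$, and hence the outer index $\basis(p^{-1})$ ranges over all Fishburn bases; since these partition $\Cay[n]$, summing over $p\in\Sym[n]$ collapses the two nested sums over permutations and bases into a single sum over $x\in\Cay[n]$. This yields
\[
\twoeul_n(1+s,1+t)=\sum_{x\in\Cay[n]}\sum_{u\in\WI(x)}s^{n-\max(u)}t^{n-\max(x)}
=\sum_{x\in\Cay[n]}\sum_{u\in\WI(x)}s^{n-\max(u)}t^{n-\max(x)}.
\]

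Next I would identify the right-hand side with $\twocay_n(s,t)$. By definition $\twocay_n(s,t)=\sum_{(u,v)\in\Bur[n]}s^{n-\max(u)}t^{n-\max(v)}$, and the disjoint-union description $\Bur[n]=\bigcup_{v\in\Cay[n]}\WI(v)\times\{v\}$ lets us write this as $\sum_{v\in\Cay[n]}\sum_{u\in\WI(v)}s^{n-\max(u)}t^{n-\max(v)}$, which is exactly the expression just obtained (after renaming $x$ to $v$). Hence $\twoeul_n(1+s,1+t)=\twocay_n(s,t)$.

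For the strict version the argument is structurally identical, with the strict analogues substituted throughout. One starts from $\twoeul^{\circ}_n(s,t)=\sum_{p\in\Sym[n]}s^{\des(p)}t^{\asc(p^{-1})}$. Here I would want a strict counterpart of Proposition~\ref{prop_oneplus_square}, namely $(1+s)^{\des(p)}(1+t)^{\des^{\circ}(p^{-1})}=\sum_{x\in\basis(p^{-1})}\sum_{u\in\WIbin(x)}s^{n-\max(u)}t^{n-\max(x)}$; this follows by the same computation as Proposition~\ref{prop_oneplus_square}, now using the second identity of Lemma~\ref{lemma_oneplust_des} (the one involving $\WIbin$ and $\des^{\circ}$), the fact that $\basis(p^{-1})$ preserves the strict ascent set (Lemma~\ref{lemma_basis_descents}, so $\WIbin(x)=\WIbin(p)$ for all $x\in\basis(p^{-1})$), and Lemma~\ref{lemma_basis_or_wi} to pass from the sum over $\basis(p^{-1})$ back to a sum over $\WI(p^{-1})$. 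Then $\asc(p^{-1})=n-1-\des(p^{-1})=\des^{\circ}(p^{-1})$ since $p^{-1}$ is a permutation, so $(1+t)^{\asc(p^{-1})}=(1+t)^{\des^{\circ}(p^{-1})}$, and summing over $p$ while using $\Cay[n]=\bigcup_{p}\basis(p^{-1})$ collapses to $\sum_{x\in\Cay[n]}\sum_{u\in\WIbin(x)}s^{n-\max(u)}t^{n-\max(x)}$. Finally, $\BBur[n]=\bigcup_{v\in\Cay[n]}\WIbin(v)\times\{v\}$ identifies this with $\twocay^{\circ}_n(s,t)$.

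The only genuinely delicate point — and the step I expect to require the most care — is the bookkeeping when the nested sum over $(p,\basis(p^{-1}))$ is reindexed as a single sum over $\Cay[n]$: one must check that the map $(p,x)\mapsto x$ is a bijection from $\{(p,x): p\in\Sym[n],\, x\in\basis(p^{-1})\}$ to $\Cay[n]$, which amounts to the fact recalled in the text that each $x\in\Cay$ lies in exactly one Fishburn basis (with the unique $p$ recovered as $p=\Gamma(\id,x)^{-1}$, equivalently $p^{-1}=\Gamma(\id,x)$). Everything else is a routine concatenation of the already-established lemmas, so the write-up is short; I would present the weak case in full and remark that the strict case is entirely analogous, possibly stating the strict form of Proposition~\ref{prop_oneplus_square} explicitly for clarity.
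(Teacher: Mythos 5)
Your proof of the first identity is correct and is essentially the paper's own argument (the paper runs the same computation in the other direction, starting from $\twocay_n(s,t)$ and decomposing $\Bur[n]$ over Fishburn bases before invoking Proposition~\ref{prop_oneplus_square}). The strict case, however, contains a genuine gap, and it is precisely the step you flagged as ``structurally identical.''

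The proposed strict analogue of Proposition~\ref{prop_oneplus_square} is false. Your justification rests on the claim that $\WIbin(x)=\WIbin(p)$ for all $x\in\basis(p^{-1})$, but $\WIbin(x)$ is determined by the \emph{strict descent set} $\Des^{\circ}(x)$, and Lemma~\ref{lemma_basis_descents} only guarantees that $\Des(x)$ and $\Asc^{\circ}(x)$ are constant on a Fishburn basis --- $\Des^{\circ}(x)$ and $\Asc(x)$ are not. Concretely, take $p=2413$, so $p^{-1}=3142$ and $\basis(p^{-1})=\{1212,1312,2313,2413\}$; then $\Des^{\circ}(2313)=\emptyset$ while $\Des^{\circ}(2413)=\{2\}$, so $\WIbin(2313)=\{1234\}\neq\{1234,1223\}=\WIbin(2413)$, and one checks that
\[
\sum_{x\in\basis(p^{-1})}\sum_{u\in\WIbin(x)}s^{4-\max(u)}t^{4-\max(x)}
=(1+s)(1+t+t^{2})+t\neq(1+s)(1+t)^{2}.
\]
A second error compounds this: you write $\asc(p^{-1})=n-1-\des(p^{-1})=\des^{\circ}(p^{-1})$, but for a permutation $\des^{\circ}(p^{-1})=\des(p^{-1})$, so the second equality fails whenever $\des(p^{-1})\neq(n-1)/2$ (again $p^{-1}=3142$: $\asc=1$ but $\des^{\circ}=2$). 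Had both claims been true, your computation would in fact produce $\sum_{p}(1+s)^{\des(p)}(1+t)^{\des(p^{-1})}=\twoeul_n(1+s,1+t)$ rather than the strict polynomial. The paper avoids all of this by first applying Lemma~\ref{lemma_oneplust_des} to collapse the inner sum to $(1+s)^{\des^{\circ}(v)}$, then substituting $w=v^{r}$ to convert $\des^{\circ}$ into $\asc^{\circ}$ --- which \emph{is} the statistic fixed on Fishburn bases --- and only then decomposing $\Cay[n]$ into bases, finishing with a second reversal $p=q^{r}$ to land on $\sum_{p}(1+s)^{\des(p)}(1+t)^{\asc(p^{-1})}$. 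Some such detour through the reverse map (or an equivalent symmetry) is unavoidable; the strict case is not a verbatim repetition of the weak one.
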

\begin{proof}
  Recall that $\Cay[n]$ is the disjoint union $\Cay[n]=
  \bigcup_{q\in\Sym[n]}\basis(q)$. By letting $q=p^{-1}$, we have
  \begin{align*}
  \Bur[n]
    &= \bigcup_{v\in\Cay[n]}\bigcup_{u\in\WI(v)}\{(u,v)\} \\
    &= \bigcup_{p\in\Sym[n]}\bigcup_{x\in\basis(p^{-1})}\bigcup_{u\in\WI(x)}\{(u,x)\},
  \end{align*}
  where once again all the unions are disjoint.
  The desired equation for $\twoeul(1+s,1+t)$ follows by
  Proposition~\ref{prop_oneplus_square}:
  \begin{align*}
    \twocay_n(s,t)&=\sum_{(u,v)\in\Bur[n]}s^{n-\max(u)}t^{n-\max(v)}\\
                  &=\sum_{p\in\Sym[n]}\sum_{x\in\basis(p^{-1})}\sum_{u\in\WI(x)}s^{n-\max(u)}t^{n-\max(x)}\\
                  &=\sum_{p\in\Sym[n]}(1+s)^{\des(p)}(1+t)^{\des(p)^{-1}}\\
                  &=\twoeul_n(1+s,1+t).
  \end{align*}
  A more involved computation is required for the strict case:
  \begin{align*}
    \twocay^{\circ}_n(s,t)
    &=\sum_{(u,v)\in\BBur[n]}s^{n-\max(u)}t^{n-\max(v)}\\
    &=\sum_{v\in\Cay[n]}\sum_{u\in\WIbin(v)}s^{n-\max(u)}t^{n-\max(v)}\\
    &=\sum_{v\in\Cay[n]}t^{n-\max(v)}(1+s)^{\des^{\circ}(v)}
    & \text{(by Lemma~\ref{lemma_oneplust_des})}\\
    &=\sum_{w\in\Cay[n]}t^{n-\max(w)}(1+s)^{\asc^{\circ}(w)}
    & (w=v^r)\\
    &=\sum_{q\in\Sym[n]}\sum_{w\in\basis(q^{-1})}t^{n-\max(w)}(1+s)^{\asc^{\circ}(w)}\\
    &=\sum_{q\in\Sym[n]}\sum_{w\in\basis(q^{-1})}t^{n-\max(w)}(1+s)^{\asc(q)}
    & \text{(by Lemma~\ref{lemma_basis_descents})}   \\
    &=\sum_{q\in\Sym[n]}(1+s)^{\asc(q)}\left(\sum_{w\in\WI(q^{-1})}t^{n-\max(w)}\right)
    & \text{(by Lemma~\ref{lemma_basis_or_wi})}\\
    &=\sum_{q\in\Sym[n]}(1+s)^{\asc(q)}(1+t)^{\des(q^{-1})}
    & \text{(by Lemma~\ref{lemma_oneplust_des})}\\
    &=\sum_{p\in\Sym[n]}(1+s)^{\des(p)}(1+t)^{\asc(p^{-1})}
    & (p=q^r). & \qedhere
  \end{align*}
\end{proof}

As discussed at the beginning of this section we have
$C_n(t)=\twoeul_n(2,t)$ and $C^{\circ}_n(t)=\twoeul^{\circ}_n(2,t)$. Now,
by Theorem~\ref{Thm:twosided-caylerians}, we obtain the following
equations for the Caylerian polynomials:
$$
C_n(1+t)=\twocay_n(1,t)
\quand
C^{\circ}_n(1+t)=\twocay^{\circ}_n(1,t).
$$
The above equations can be seen as analogs of
equation~\eqref{eulerian-cay}.  The problem of finding an exponential
generating function for $C_n(1+t)$ and $C^{\circ}_n(1+t)$, akin to
equation~\eqref{eulerian-egf}, remains open.

We shall reformulate Theorem~\ref{Thm:twosided-caylerians} in terms of
the joint distribution of $\max(u)$ and $\max(v)$ on Burge words
$(u,v)$, or, equivalently, the joint distribution of $\rows(A)$ and
$\cols(A)$ on Burge matrices $A$.  Let
$$
\twocayhat_n(s,t)=
\sum_{(u,v)\in\Bur[n]}s^{\max(u)}t^{\max(v)}
=\sum_{A\in\Mat[n]}s^{\rows(A)}t^{\cols(A)}.
$$
Then
$$
\twocayhat_n(s,t)
=(st)^n\twocay_n\left(\frac{1}{s},\frac{1}{t}\right)
=(st)^n\twoeul_n\left(\frac{s+1}{s},\frac{t+1}{t}\right);
$$
alternatively, by setting $x=(s+1)/s$ and $y=(t+1)/t$, we get
$$
\frac{\twoeul_n(x,y)}{(x-1)^n(y-1)^n}
=\twocayhat_n\left(\frac{1}{x-1},\frac{1}{y-1}\right).
$$
In terms of the Caylerian polynomials,
$$
C_n(t)=\twocay_n(1,t-1)
=(t-1)^n\twocayhat_n\left(1,\frac{1}{t-1}\right).
$$
Similar results hold for the strict case. We thus obtain the
following result.

\begin{corollary}\label{cor_twohat}
  For $n\ge 0$,
  $$
  C_n(t)=(t-1)^n\twocayhat_n\left(1,\frac{1}{t-1}\right)
  \quand
  C^{\circ}_n(t)=(t-1)^n\twocayhat^{\circ}_n\left(1,\frac{1}{t-1}\right).
  $$
  Furthermore,
  $$
  \twocayhat_n(s,t)
  =(st)^n\twoeul_n\left(\frac{s+1}{s},\frac{t+1}{t}\right)
  \quand
  \twocayhat^{\circ}_n(s,t)
  =(st)^n\twoeul^{\circ}_n\left(\frac{s+1}{s},\frac{t+1}{t}\right).
  $$
\end{corollary}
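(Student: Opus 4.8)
The plan is to obtain all four identities by combining Theorem~\ref{Thm:twosided-caylerians} with the elementary substitution that replaces an exponent $k$ by $n-k$. In fact the two weak identities are exactly the computations already displayed in the discussion preceding the statement, and the two strict identities are proved \emph{verbatim}, so the work is purely organizational.

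First I would record how $\twocayhat$ and $\twocay$ are related. Straight from the definitions, a Burge word $(u,v)\in\Bur[n]$ contributes $s^{\max(u)}t^{\max(v)}$ to $\twocayhat_n(s,t)$, and this monomial equals $(st)^n(1/s)^{n-\max(u)}(1/t)^{n-\max(v)}$; summing over $\Bur[n]$ gives $\twocayhat_n(s,t)=(st)^n\twocay_n(1/s,1/t)$, while the identical reindexing over $\BBur[n]$ gives $\twocayhat^{\circ}_n(s,t)=(st)^n\twocay^{\circ}_n(1/s,1/t)$. Since $\max(u)\le n$ and $\max(v)\le n$, clearing the denominators turns these into genuine identities of polynomials.

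Next I would substitute Theorem~\ref{Thm:twosided-caylerians}. The equalities $\twoeul_n(1+s,1+t)=\twocay_n(s,t)$ and $\twoeul^{\circ}_n(1+s,1+t)=\twocay^{\circ}_n(s,t)$ hold as polynomial identities, hence remain valid under $s\mapsto 1/s$, $t\mapsto 1/t$; this yields $\twocay_n(1/s,1/t)=\twoeul_n\bigl((s+1)/s,(t+1)/t\bigr)$ and its strict analogue, which combined with the previous paragraph is the ``Furthermore'' part of the statement. For the first pair of identities I would start from $C_n(t)=\twoeul_n(2,t)$ and $C^{\circ}_n(t)=\twoeul^{\circ}_n(2,t)$, noted at the start of Section~\ref{section_2sidedpol}; writing $2=1+1$ and $t=1+(t-1)$ and invoking Theorem~\ref{Thm:twosided-caylerians} gives $C_n(t)=\twocay_n(1,t-1)$ and $C^{\circ}_n(t)=\twocay^{\circ}_n(1,t-1)$. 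Finally, specializing $s=1$ in the relations of the previous paragraph and replacing $t$ by $1/(t-1)$ gives $\twocayhat_n(1,1/(t-1))=(t-1)^{-n}\twocay_n(1,t-1)$, hence $C_n(t)=(t-1)^n\twocayhat_n(1,1/(t-1))$, and likewise for $C^{\circ}_n$.

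There is no genuine obstacle here: the corollary is a bookkeeping consequence of Theorem~\ref{Thm:twosided-caylerians}. The only points deserving a remark are that the substitutions $s\mapsto 1/s$ and $t\mapsto 1/(t-1)$ are legitimate precisely because the identities being substituted into hold as polynomial identities rather than merely numerically, and that although $\twocayhat_n(1,1/(t-1))$ is a priori a Laurent expression in $t-1$, the bound $\max(v)\le n$ guarantees that $(t-1)^n\twocayhat_n(1,1/(t-1))$ is an honest polynomial in $t$, so the displayed formula for $C_n(t)$ is literally an equality of polynomials.
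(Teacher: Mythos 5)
Your proposal is correct and follows essentially the same route as the paper: the paper likewise derives $\twocayhat_n(s,t)=(st)^n\twocay_n(1/s,1/t)$ directly from the definition, combines it with Theorem~\ref{Thm:twosided-caylerians} to get the ``Furthermore'' identities, and obtains $C_n(t)=\twocay_n(1,t-1)=(t-1)^n\twocayhat_n\bigl(1,\tfrac{1}{t-1}\bigr)$ from $C_n(t)=\twoeul_n(2,t)$, with the strict case handled identically. Your added remarks on why the substitutions are legitimate as polynomial identities are sound but not needed beyond what the paper's inline computations already establish.
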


\begin{remark}
  We can use Theorem~\ref{Thm:twosided-caylerians} and
  Corollary~\ref{cor_twohat} to reinterpret the symmetry
  $C^{\circ}_n(t)=t^{n-1}C_n(1/t)$ in terms of the two-sided Caylerian
  polynomials. A simple computation leads to
  $$
  \twocay^{\circ}_n(1,t-1)
  =t^{n-1}\twocay_n\left(1,\frac{1-t}{t}\right),
  $$
  or, alternatively,
  $$
  \twocayhat^{\circ}_n\left(1,\frac{1}{t-1}\right)
  =\frac{(-1)^n}{t}\twocayhat_n\left(1,\frac{t}{1-t}\right).
  $$
  What is the combinatorial meaning of this equality in terms of
  Burge structures?
\end{remark}

Figure~\ref{figure_joint_distr} shows the polynomials $\twocayhat_n(s,t)$
and $\twocayhat^{\circ}_n(s,t)$ up to $n=5$.
Below we illustrate the equalities of Corollary~\ref{cor_twohat} for $n=2$.
The five matrices in $\Mat[2]$, and the corresponding
contribution $s^{\rows(A)}t^{\cols(A)}$ to $\twocayhat_2(s,t)$, are
\begin{align*}
  \begin{bmatrix} 2 \end{bmatrix}=st,\quad
  \begin{bmatrix} 1 & 1\end{bmatrix}=st^2,\quad
  \begin{bmatrix} 1 \\ 1 \end{bmatrix}=s^2t,\quad
  \begin{bmatrix} 1 & 0 \\ 0 & 1 \end{bmatrix}=s^2t^2,\quad
  \begin{bmatrix} 0 & 1 \\ 1 & 0 \end{bmatrix}=s^2t^2,
\end{align*}
which gives
$$
\twocayhat_2(s,t)=st+st^2+s^2t+2s^2t^2.
$$
It is now easy to check that
\[
(st)^2\twoeul_2\left(\frac{s+1}{s},\frac{t+1}{t}\right)
=\twocayhat_2(s,t),
\]
as expected. The analogous computation for binary matrices and the strict
two-sided Caylerian polynomial gives
$$
\twocayhat^{\circ}_2(s,t)
=st^2+s^2t+2s^2t^2
=(st)^2\twoeul^{\circ}_2\left(\frac{s+1}{s},\frac{t+1}{t}\right).
$$

Finally,
\begin{align*}
(t-1)^2\twocayhat_2\left(1,\frac{1}{t-1}\right)
&=2t+1=C_2(t)
\shortintertext{and}
(t-1)^2\twocayhat^{\circ}_2\left(1,\frac{1}{t-1}\right)
&=t+2
=C_2^{\circ}(t).
\end{align*}

\begin{figure}
  \begin{align*}
    \begin{bmatrix}
      1
    \end{bmatrix},\quad
    \begin{bmatrix}
      1 & 1\\
      1 & 2
    \end{bmatrix},\quad
    \begin{bmatrix}
      1 & 2 & 1\\
      2 & 8 & 6\\
      1 & 6 & 6
    \end{bmatrix},\quad
    \begin{bmatrix}
      1 & 3 & 3 & 1\\
      3 & 19 & 30 & 14\\
      3 & 30 & 63 & 36\\
      1 & 14 & 36 & 24
    \end{bmatrix},\quad
    \begin{bmatrix}
      1 & 4 & 6 & 4 & 1\\
      4 & 36 & 90 & 88 & 30\\
      6 & 90 & 306 & 372 & 150\\
      4 & 88 & 372 & 528 & 240\\
      1 & 30 & 150 & 240 & 120
    \end{bmatrix}&
    \\
    \begin{bmatrix}
      1
    \end{bmatrix},\quad
    \begin{bmatrix}
      0 & 1 \\
      1 & 2
    \end{bmatrix},\quad
    \begin{bmatrix}
      0 & 0 &1\\
      0 & 4 & 6\\
      1 & 6 &6
    \end{bmatrix},\quad
    \begin{bmatrix}
      0 & 0 & 0 & 1\\
      0 & 1 & 12 & 14\\
      0 & 12 & 45 & 36\\
      1 & 14 & 36 & 24
    \end{bmatrix},\quad
    \begin{bmatrix}
      0 & 0 & 0 & 0 & 1\\
      0 & 0 & 6 & 32 & 30\\
      0 & 6 & 90 & 228 & 150\\
      0 & 32 & 228 & 432 & 240\\
      1 & 30 & 150 & 240 & 120
    \end{bmatrix}&.
  \end{align*}
  \caption{Coefficient matrices of $\twocayhat_n(s,t)$,
    in the top row, and $\twocayhat^{\circ}_n(s,t)$, in the bottom row, for
    $n=1,2,3,4,5$. The $(i,j)$-th entry is the
    coefficient of $s^it^j$.}
\label{figure_joint_distr}
\end{figure} 

\begin{remark}
  A well known conjecture by Gessel asserts the $\gamma$-nonnegativity
  of the two-sided Eulerian polynomial. That is, for $n\ge 0$ there
  exist nonnegative integers $\gamma_{n,i,j}$, with $0\le i-1,j,j+2i\le n-1$,
  such that
  $$
  \twoeul_n(s,t)=\sum_{i,j}\gamma_{n,i,j}(st)^i(s+t)^j(1+st)^{n-1-j-2i}.
  $$
  For example, for $n=5$ we have
  $$
  \twoeul_5(s,t)=(1+st)^4+16(st)(1+st)^2+16(st)^2+6(st)(s+t)(1+st).
  $$
  This conjecture has recently been proved by Lin~\cite{L} via algebraic
  means. However, the $\gamma$-nonnegativity of the
  classical Eulerian polynomial $A_n(t)$ has a beautiful
  combinatorial proof by \emph{valley-hopping}~\cite{Pe} and it would be
  interesting to find a similar combinatorial proof for the two-sided
  case. In Theorem~\ref{Thm:twosided-caylerians}, we showed that
  \begin{align*}
  \twocay_n(s,t)
    &= \twoeul_n(1+s,1+t) \\
    &= \sum_{i,j}\gamma_{n,i,j}(st+s+t+1)^i(s+t+2)^j(st+s+t+2)^{n-1-j-2i}.
  \end{align*}
  Therefore, a combinatorial proof of the $\gamma$-nonnegativity could
  potentially be obtained by working with $\twocay_n(s,t)$, in terms of
  Burge words or Burge matrices. For instance, one could try to
  partition the set of Burge words (matrices) in classes whose
  contribution to $\twocay_n(s,t)$ is
  $(st+s+t+1)^i(s+t+2)^j(st+s+t+2)^{n-1-j-2i}$, where possibly $i$ and
  $j$ are some suitable parameters on $\Bur[n]$ (or $\Mat[n]$). This
  way, the coefficient $\gamma_{n,i,j}$ would count the number of such
  classes (with parameters $i$ and $j$), and thus would be nonnegative.
  In fact, it turns out that looking at binary matrices (or at the
  corresponding set of biwords) would be enough to prove the
  nonnegativity of the coefficients $\gamma_{n,i,j}$.  Indeed,
  by Theorem~\ref{Thm:twosided-caylerians},
  $$
  \twocay^{\circ}_n(s,t)=\twoeul^{\circ}_n(1+s,1+t).
  $$
  Let $\gamma^{\circ}_{n,i,j}$ be such that
  $$
  \twoeul^{\circ}_n(s,t)=\sum_{i,j}\gamma^{\circ}_{n,i,j}(st)^i(s+t)^j(1+st)^{n-1-j-2i}.
  $$
  Then
  \begin{align*}
    \twoeul^{\circ}_n(s,t)
    &= \sum_{p\in\Sym[n]}s^{\des(p)}t^{\asc(p)}\\
    &= \sum_{p\in\Sym[n]}s^{\des(p)}t^{n-1-\des(p)}\\
    &= t^{n-1}\twoeul_n\Bigl(s,\frac{1}{t}\Bigr)\\
    &= t^{n-1}\sum_{i,j}\gamma_{n,i,j}
      \Bigl(\frac{s}{t}\Bigr)^i\Bigl(s+\frac{1}{t}\Bigr)^j\Bigl(1+\frac{s}{t}\Bigr)^{n-1-j-2i}\\
    &= \sum_{i,j}\gamma_{n,i,j}s^it^{n-1-i-j-n+1+j+2i}(st+1)^j(t+s)^{n-1-j-2i}\\
    &=\sum_{i,j}\gamma_{n,i,j}(st)^i(s+t)^{n-1-j-2i}(1+st)^j.
  \end{align*}
  Finally,
  $$
  \sum_{i,j}\gamma^{\circ}_{n,i,j}(st)^i(s+t)^j(1+st)^{n-1-j-2i}
  =\sum_{i,j}\gamma_{n,i,j}(st)^i(s+t)^{n-1-j-2i}(1+st)^j
  $$
  from which
  $$
  \gamma^{\circ}_{n,i,j}=\gamma_{n,i,n-1-j-2i}.
  $$
  follows. Ultimately, the nonnegativity of the coefficients $\gamma_{n,i,j}$
  is expressed in terms of the strict two-sided Eulerian polynomial as
  $$
  \twocay^{\circ}_n(s,t)
  =\sum_{i,\ell}
  \gamma_{n,i,\ell}(st+s+t+1)^i(s+t+2)^{n-1-\ell-2i}(st+s+t+2)^{\ell},
  $$
  where $\ell=n-1-j-2i$.
%
\end{remark}

\section{Stanley's $v$-compatible maps}\label{section_comp_maps}

Let us now return to the Eulerian polynomial $A_n(t)$. Recall Carlitz
identity:
\begin{equation}
  \frac{tA_n(t)}{(1-t)^{n+1}}=\sum_{m \geq 1} m^nt^m.
\end{equation}
We wish to derive an analogous result for Cayley permutations.
Let us first sketch the proof presented by
Stanley~\cite[Proposition 1.4.4]{St} for the classical case.

\begin{definition}[Stanley~\cite{St}]\label{def_compatibility}
  Let $v\in\Sym[n]$. A map $x:[n]\to\nat$ is \emph{$v$-compatible} if it
  satisfies the following two conditions:
  \begin{itemize}
  \item[$(i)$] $x(v_1)\le x(v_2)\le\cdots\le x(v_n)$;
  \item[$(ii)$] $x(v_i)<x(v_{i+1})$ for each $i\in\Asc(v)$.
  \end{itemize}
\end{definition}
We remark that to better fit our presentation the roles of ascents and
descents, as well as the inequalities in $(i)$ and
$(ii)$, are reversed with respect to Stanley's exposition.

For $m\ge 0$, denote by $\comp_m(v)$ the set of $v$-compatible
maps $x:[n]\to[m]$.
Using a direct combinatorial argument, Stanley proves that
$$
|\comp_m(v)|=\multiset{m-\asc(v)}{n},
$$
where $\multiset{a}{b}=\binom{a+b-1}{b}$ is the number of multisets
of cardinality $b$ over a set of size $a$.
Stanley proceeds by showing that for each map $x:[n]\to[m]$ there is a unique
permutation $v$ of $[n]$ such that $x$ is $v$-compatible.
As a result, the disjoint union
$$
[m]^{[n]} = \bigcup_{v\in\Sym[n]}\comp_m(v),
$$
holds, which leads to the equation
\begin{equation}\label{eq_m_to_the_n}
m^n=\sum_{v\in\Sym[n]}|\comp_m(v)|=\sum_{v\in\Sym[n]}\multiset{m-\asc(v)}{n}.
\end{equation}
Now, the generating function for multisets over $[n]$, according
to size, is
\begin{equation}\label{eq_gf_multiset}
  \sum_{k\ge 0}\multiset{n}{k}t^k
  = (1+t+t^2+\cdots)^n \\
  = \frac{1}{(1-t)^n}.
\end{equation}
Moreover,
\begin{align*}
\frac{t^{1+\asc(v)}}{(1-t)^{n+1}}&=
\sum_{k\ge 0}\multiset{n+1}{k}t^{1+\asc(v)+k}\\
&=\sum_{k\ge 0}\binom{n+k}{k}t^{1+\asc(v)+k}\\
&=\sum_{k\ge 0}\binom{n+k}{n}t^{1+\asc(v)+k}\\
&=\sum_{m\ge 1}\binom{n+m-\asc(v)-1}{n}t^{m} & (m=1+\asc(v)+k)\\
&=\sum_{m\ge 1}\multiset{m-\asc(v)}{n}t^m\\
&=\sum_{m\ge 1}|\comp_m(v)|t^m
\end{align*}
and Carlitz identity follows:
\begin{align*}
  \frac{tA_n(t)}{(1-t)^{n+1}}
  &= \sum_{v\in\Sym[n]}\frac{t^{1+\asc(v)}}{(1-t)^{n+1}} \\
  &= \sum_{m\ge 1}\sum_{v\in\Sym[n]}|\comp_m(v)|t^m \\
  &= \sum_{m\ge 1} m^nt^m.
\end{align*}

We wish to use the Burge transpose to generalize the notion of
$v$-compatibility and obtain similar equations for the Caylerian
polynomials.
The relation between $v$-compatible maps and the Burge transpose
is highlighted in the next proposition.

\begin{proposition}\label{prop_comp_basis}
For $v\in\Sym[n]$ and $x:[n]\to[m]$,
$$
x\in\comp_m(v) \,\iff\, x\circ v \in\WIgen_m(v).
$$
In particular,
$$
\comp_m(v)=
\Gamma\bigl(\WIgen_m(v)\times\{ v\}\bigr)
\quand
|\comp_m(v)|=|\WIgen_m(v)|.
$$
\end{proposition}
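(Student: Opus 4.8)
The plan is to unwind the two conditions in Definition~\ref{def_compatibility} and recognize that, read through the substitution $i\mapsto v_i$, they say exactly that $x\circ v$ is a weakly increasing map $[n]\to[m]$ whose descent set lies inside $\Des(v)$. Concretely, I would first observe that condition $(i)$, $x(v_1)\le x(v_2)\le\cdots\le x(v_n)$, is verbatim the statement that the word $i\mapsto (x\circ v)(i)=x(v_i)$ belongs to $\WIgen_m[n]$. Granted that, $x\circ v$ is weakly increasing, so $\Des(x\circ v)=\{\,i\in[n-1]:(x\circ v)(i)=(x\circ v)(i+1)\,\}$. Since $v$ is a permutation we have the disjoint partition $[n-1]=\Asc(v)\sqcup\Des(v)$, so condition $(ii)$ — which prohibits $(x\circ v)(i)=(x\circ v)(i+1)$ for $i\in\Asc(v)$ — is equivalent to $\Des(x\circ v)\subseteq\Des(v)$. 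Putting these together yields $x\in\comp_m(v)$ iff $x\circ v\in\WIgen_m[n]$ with $\Des(x\circ v)\subseteq\Des(v)$, i.e.\ iff $x\circ v\in\WIgen_m(v)$, which is the displayed equivalence.

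For the ``in particular'' clause I would first pin down the Burge transpose of a biword whose bottom row is a permutation. Given $(u,v)$ with $v\in\Sym[n]$ and $u\in\WIgen_m(v)$ (so that $\Des(u)\subseteq\Des(v)$, whence $(u,v)\in\Genbur_m[n]$ and the extended transpose indeed applies), flipping the columns $\binom{u_i}{v_i}$ to $\binom{v_i}{u_i}$ and then sorting by top entry in ascending order involves no ties, because $v$ is injective; the column ending up in position $j$ is the one with $v_i=j$, that is $i=v^{-1}(j)$, and its bottom entry is $u_{v^{-1}(j)}$. Hence $\Gamma(u,v)=u\circ v^{-1}$, generalizing the special case $\binom{\id}{p}^T=\binom{\id}{p^{-1}}$ recorded in Section~\ref{section_prel}. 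Combined with the first part, the map $x\mapsto x\circ v$ is a bijection $\comp_m(v)\to\WIgen_m(v)$ whose inverse sends $u$ to $u\circ v^{-1}=\Gamma(u,v)$; therefore $\comp_m(v)=\{\,\Gamma(u,v):u\in\WIgen_m(v)\,\}=\Gamma\bigl(\WIgen_m(v)\times\{v\}\bigr)$ and $|\comp_m(v)|=|\WIgen_m(v)|$.

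The whole argument is essentially bookkeeping, so I do not anticipate a genuine obstacle; the one spot that warrants care is the computation $\Gamma(u,v)=u\circ v^{-1}$ and, with it, the check that the extension of the Burge transpose to maps $[n]\to[m]$ is legitimately in play — which it is, precisely because membership in $\WIgen_m(v)$ bakes in the hypothesis $\Des(u)\subseteq\Des(v)$ needed to define $(u,v)^T$. Everything else follows by matching definitions.
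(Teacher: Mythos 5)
Your proof is correct and follows essentially the same route as the paper: unwind Definition~\ref{def_compatibility} to get the equivalence with $x\circ v\in\WIgen_m(v)$, then use the Burge transpose of $(u,v)$ with $v$ a permutation to identify $\comp_m(v)$ with $\Gamma\bigl(\WIgen_m(v)\times\{v\}\bigr)$. The only cosmetic difference is that you package the transpose computation as the clean formula $\Gamma(u,v)=u\circ v^{-1}$, whereas the paper verifies the column correspondence $u_j=x(v_j)$ in both directions; the content is identical.
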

\begin{proof}
It is easy to see that $x(v_i)<x(v_{i+1})$ for each $i\in\Asc(v)$
if and only if $\Des\bigl(x(v_1)\cdots x(v_n)\bigr)\subseteq\Des(v)$,
i.e.\ $x(v_1)\cdots x(v_n)\in\WIgen_m(v)$.
This proves the first statement.
Let us now prove the equality
$\comp_m(v)=\Gamma\left(\WIgen_m(v)\times\{ v\}\right)$.
If $x\in\comp_m(v)$, then
$\bigl(x(v_1)\cdots x(v_n),v\bigr)\in\PGenbur_{m}$ and
$$
\begin{pmatrix}
x(v_1)&\dots&x(v_n)\\v_1&\dots&v_n
\end{pmatrix}^T=
\begin{pmatrix}
1&\dots&n\\x(1)&\dots&x(n)
\end{pmatrix}
=\binom{\id}{x}.
$$
In particular, $x=\Gamma\bigl(x(v_1)\cdots x(v_n),v\bigr)$, as desired.
Conversely, let $x=\Gamma(u,v)$ for some $u\in\WIgen_m(v)$.
Then
$$
\begin{pmatrix}
u_1&\dots&u_n\\v_1&\dots&v_n
\end{pmatrix}^T=
\begin{pmatrix}
1&\dots&n\\x(1)&\dots&x(n)
\end{pmatrix}.
$$
Since the columns of the biword on the left-hand side are obtained
by flipping the right-hand side upside down
we have
$$
\binom{x(i)}{i}=\binom{x(v_j)}{v_j}=\binom{u_j}{v_j}
\quand
u_j=x(v_j),
$$
for each $i\in[n]$,
where $j=v^{-1}(i)$.
In particular, $\bigl(x(v_1)\cdots x(v_n),v\bigr)\in\PBur_{m}$
and $x\in\comp_m(v)$, which gives the desired equality.
Finally, $|\comp_m(v)|=|\WIgen_m(v)|$
is an immediate consequence of $\Gamma$ being
injective on $\WIgen_m(v)\times\{ v\}$.
\end{proof}

Stanley showed that for each map $x:[n]\to[m]$ there
is a unique permutation $v\in\Sym[n]$ such that $x$ is $v$-compatible.
Let us rederive this result using our machinery.
By Proposition~\ref{prop_comp_basis}, $x$ is $v$-compatible
if and only if $x=\Gamma(u,v)$, where $u=\sort(x)$ and
$(u,v)\in\Genbur_{m}[n]$. Thus, if $x$ is $v$-compatible, we have
$(u,v)^T=(\id,x)$,
or, equivalently, $(\id,x)^T=(\sort(x),v)$
and $v$ is uniquely determined as
$$
  v=\Gamma(\id,x).
$$

Let $\comp(v)=\bigcup_{m\ge 0}\comp_m(v)$.
\begin{corollary}\label{cor_fish_basis}
  The Fishburn basis of a permutation $v$ is the set of
  $v$-compatible maps that are Cayley permutations:
  $$
  \basis(v)=\Cay\cap\comp(v).
  $$
\end{corollary}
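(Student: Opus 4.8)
The plan is to prove the two inclusions $\basis(v)\subseteq\Cay\cap\comp(v)$ and $\Cay\cap\comp(v)\subseteq\basis(v)$ separately, using Proposition~\ref{prop_comp_basis} as the main engine. That proposition already identifies $\comp_m(v)$ with $\Gamma\bigl(\WIgen_m(v)\times\{v\}\bigr)$, while by definition $\basis(v)=\Gamma\bigl(\WI(v)\times\{v\}\bigr)$; so after taking the union over $m$ the statement reduces to comparing the top rows $\WI(v)$ and $\bigcup_{m}\WIgen_m(v)$ under the map $u\mapsto\Gamma(u,v)$, and checking that intersecting with $\Cay$ on the Cayley-permutation side corresponds exactly to this restriction of top rows.

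For the forward inclusion, I would note that every $x\in\basis(v)$ is a Cayley permutation by definition of the Fishburn basis, so it suffices to show $x\in\comp(v)$. Writing $x=\Gamma(u,v)$ with $u\in\WI(v)$, and observing that a weakly increasing Cayley permutation on $[n]$ takes values in $[n]$, we have $\WI(v)\subseteq\WIgen_m(v)$ for any $m\ge n$; hence $x=\Gamma(u,v)\in\Gamma\bigl(\WIgen_m(v)\times\{v\}\bigr)=\comp_m(v)\subseteq\comp(v)$ by Proposition~\ref{prop_comp_basis}. For the reverse inclusion, take $x\in\Cay\cap\comp(v)$, say $x\in\comp_m(v)$. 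Proposition~\ref{prop_comp_basis} gives $x\circ v\in\WIgen_m(v)$ and $x=\Gamma(x\circ v,v)$. Since $x$ is a Cayley permutation and $v$ is a bijection of $[n]$, we have $\img(x\circ v)=\img(x)=[k]$ for some $k$, so $x\circ v$ is a weakly increasing map with image an initial segment, i.e.\ a weakly increasing Cayley permutation; together with $\Des(x\circ v)\subseteq\Des(v)$ this yields $x\circ v\in\WI(v)$, whence $x=\Gamma(x\circ v,v)\in\Gamma\bigl(\WI(v)\times\{v\}\bigr)=\basis(v)$. (Equivalently, one can invoke the characterization~\eqref{basis-characterization}: the proof of Proposition~\ref{prop_comp_basis} shows $(\id,x)^T=(x\circ v,v)$, and $x\circ v=\sort(x)$ because $x$ is $v$-compatible, so $(\id,x)^T=(\sort(x),v)$, which is precisely the condition $x\in\basis(v)$.)

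The only real obstacle is the bookkeeping around domains: $\comp(v)$ consists of maps into arbitrary $[m]$, and $\Gamma$ on $\Genbur_m$ may produce maps with ``gaps'' that are not Cayley permutations, so one must verify carefully that passing to $\Cay$ on the output side of $\Gamma$ is the same as cutting the top rows down from $\WIgen_m(v)$ to $\WI(v)$. What makes this go through cleanly is exactly that $v$ is a permutation, so that $\img(x\circ v)=\img(x)$, combined with the elementary fact that a weakly increasing map is a Cayley permutation if and only if its image is an initial segment of $\nat$.
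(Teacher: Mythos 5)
Your proof is correct and follows essentially the same route as the paper's: both reduce the statement to Proposition~\ref{prop_comp_basis} together with the identity $\WI(v)=\WIgen(v)\cap\Cay$, the crux being that for a permutation $v$ the map $\Gamma(\cdot,v)$ sends Cayley-permutation top rows to Cayley-permutation outputs and conversely. You merely phrase it as two inclusions and make explicit the justification ($\img(x\circ v)=\img(x)$ since $v$ is a bijection) that the paper leaves implicit in its chain of set equalities.
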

\begin{proof}
  Assume $v\in\Sym$ and let $\WIgen(v)=\bigcup_{m\ge 1}\WIgen_m(v)$.
  By definition,
  $$
  \comp(v)=
  \Gamma\bigl(\WIgen(v)\times\{ v\}\bigr)
  \quand
  \basis(v)=\Gamma\bigl(\WI(v)\times\{ v\}\bigr).
  $$
  Clearly, $\WI(v)=\WIgen(v)\cap\Cay$ and thus
  \begin{align*}
    \comp(v)\cap\Cay
    &= \Gamma\Bigl(\WIgen(v)\times\{ v\}\Bigr)\cap\Cay\\
    &= \Gamma\Bigl(\bigl(\WIgen(v)\cap\Cay\bigr)\times\{ v\}\Bigr)\\
    &= \Gamma\Bigl(\WI(v)\times\{ v\}\Bigr)\\
    &= \basis(v).\qedhere
  \end{align*}
\end{proof}

Now, it is not immediately clear how to extend Stanley's notion of
$v$-compatibility to Cayley permutations. In fact, if $v\in\Cay[n]$ and
$x:[n]\to[m]$, then the presence of repeated entries in $v$ makes
the inequalities $\mathrm{(i)}$ and $\mathrm{(ii)}$ in
Definition~\ref{def_compatibility} meaningless. On the other hand, we
showed that the set of $v$-compatible maps is characterized
by $\comp_m(v)=\Gamma\left(\WIgen_m(v)\times\{ v\}\right)$,
and $\WIgen_m(v)$ is defined for any Cayley permutation $v$.
We shall generalize the definition of \emph{$v$-compatible maps}
accordingly by setting, for any $v\in\Cay$ and $m\ge 0$,
$$
\comp_m(v)=
\Gamma\bigl(\WIgen_m(v)\times\{ v\}\bigr).
$$
Furthermore, we define the set of \emph{strictly $v$-compatible maps} by
$$
\compbin_m(v)=
\Gamma\bigl(\WIgenbin_m(v)\times\{ v\}\bigr).
$$
By Proposition~\ref{prop_comp_basis}, our notion of
$v$-compatibility matches Stanley's definition when $v\in\Sym$.
The same goes for strict $v$-compatibility: if $v\in\Sym$, then
$\Des(v)=\Des^{\circ}(v)$ and $u\in\WIgen_m$ if and only if
$u\in\WIgenbin_m$.

\begin{example}
Let us give an example to illustrate the notion of
$v$-compatibility. Recall that
$\WIgen_m(v)=\{ u\in\WIgen_m[n]: \Des(u)\subseteq\Des(v)\}$, where $n$ is the length of $v$.
To compute $\WIgen_m(v)$, consider a biword
$$
\binom{u}{v}=
\begin{pmatrix}
u(1)&\dots&u(n)\\v(1)&\dots&v(n)
\end{pmatrix},
$$
where $u:[n]\to[m]$ is weakly increasing. Note that
$\Des(u)\subseteq\Des(v)$ if and only if $\Asc^{\circ}(v)\subseteq\Asc^{\circ}(u)$;
that is, among all the weakly increasing maps, to obtain $\WIgen_m(v)$
we will pick only those $u$ where $u(i+1)>u(i)$ for each $i\in\Asc^{\circ}(v)$.
Similarly, the set $\WIgenbin_m(v)$ consists of the weakly increasing
maps $u:[n]\to[m]$ that satisfy the stricter requirement that
$u(i+1)>u(i)$ for each $i\in\Asc(v)$.
For instance, let $v=331412$ and $m=4$. Here $n=6$,
$\Des(v)=\{ 1,2,4\}$ and $\Des^{\circ}(v)=\{ 2,4\}$. 
Thus,
$$
\WIgen_4(v)=
\{ 111223,111224,111234,111334,112334,122334,222334\}
$$
and
$
\WIgenbin_4(v)=\{
122334
\}.
$
The sets of $v$-compatible and strictly $v$-compatible maps
are obtained by applying $\Gamma$ to the biwords in
$\comp_m(v)\times\{ v\}$ and
$\compbin_m(v)\times\{ v\}$, respectively.
We get
$$
\comp_4(v)=\{
213112,214112,314112,314113,324113,324213,324223
\},
$$
where for instance $214112=\Gamma(111224,331412)$ since
$\displaystyle{\binom{111224}{331412}^{\!T}\! = \binom{112334}{214112}}$.
Also,
\[
  \compbin_4(v)=\{ \Gamma(122334,v)\}=\{ 324213\}.
\]
\end{example}

Next we wish to count compatible and strictly compatible maps.

\begin{proposition}\label{prop_num_of_compat}
  Let $v\in\Cay[n]$ and $m\ge 0$. Then
  $$
  |\comp_m(v)|=\multiset{m-\asc^{\circ}(v)}{n}
  \quand
  |\compbin_m(v)|=\multiset{m-\asc(v)}{n}.
  $$
\end{proposition}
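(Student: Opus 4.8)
The plan is to count directly the weakly increasing maps in $\WIgen_m(v)$ and $\WIgenbin_m(v)$, and then invoke Proposition~\ref{prop_comp_basis}'s injectivity of $\Gamma$ to transfer the count to the compatible maps. Recall from the preceding example that $u\in\WIgen_m(v)$ precisely when $u:[n]\to[m]$ is weakly increasing and satisfies $u(i+1)>u(i)$ for every $i\in\SAsc(v)$; similarly $u\in\WIgenbin_m(v)$ requires the strict jump at every $i\in\Asc(v)$. So the problem reduces to counting weakly increasing words of length $n$ over the alphabet $[m]$ that are forced to strictly increase at a prescribed set of positions of size $\sasc(v)$ (respectively $\asc(v)$).

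The key step is a standard ``de-stricting'' bijection: given a weakly increasing $u:[n]\to[m]$ with a mandatory strict increase at each position in a set $S\subseteq[n-1]$ of size $r$, define $u'(i)=u(i)-|\{\,j\in S: j<i\,\}|$. Then $u'$ is an arbitrary weakly increasing map $[n]\to[m-r]$, and this correspondence is a bijection. Hence the number of such $u$ equals the number of weakly increasing maps $[n]\to[m-r]$, which is the number of multisets of size $n$ over an $(m-r)$-element set, namely $\multiset{m-r}{n}$. Applying this with $r=\sasc(v)$ gives $|\WIgen_m(v)|=\multiset{m-\sasc(v)}{n}$, and with $r=\asc(v)$ gives $|\WIgenbin_m(v)|=\multiset{m-\asc(v)}{n}$. (When $m<r$, or more precisely $m\le r$ with the relevant edge cases, both sides are $0$, consistent with the convention $\multiset{a}{b}=\binom{a+b-1}{b}$.)

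Finally, combine this with Proposition~\ref{prop_comp_basis} and the definitions: $\comp_m(v)=\Gamma(\WIgen_m(v)\times\{v\})$ and $\compbin_m(v)=\Gamma(\WIgenbin_m(v)\times\{v\})$, and $\Gamma$ is injective on $\WIgen_m(v)\times\{v\}$ (hence also on the subset $\WIgenbin_m(v)\times\{v\}$), as noted at the end of Section~\ref{section_prel}. Therefore $|\comp_m(v)|=|\WIgen_m(v)|=\multiset{m-\sasc(v)}{n}$ and $|\compbin_m(v)|=|\WIgenbin_m(v)|=\multiset{m-\asc(v)}{n}$, as claimed.

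The only mild obstacle is bookkeeping the boundary behavior: one should check that the inequality $m\ge\asc(v)+1$ (noted earlier in Section~\ref{section_emptyrows}) is exactly what makes $\WIgen_m(v)$ nonempty, so the multiset symbol correctly evaluates to $0$ outside that range; and likewise that $\WIgenbin_m(v)$ is nonempty iff $m\ge\asc(v)+?$—one should verify the de-stricting bijection handles the case $|S|=n-1$ (all positions strict) cleanly, since then $u$ is strictly increasing and exists iff $m\ge n$, matching $\multiset{m-(n-1)}{n}=\binom{m}{n}$. Everything else is a routine application of the stars-and-bars count in Equation~\eqref{eq_gf_multiset}.
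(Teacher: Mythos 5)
Your proof is correct and takes essentially the same route as the paper: both reduce the count to $|\WIgen_m(v)|$ via the injectivity of $\Gamma$ on $\WIgen_m(v)\times\{v\}$, and your ``de-stricting'' map $u\mapsto u'$ with $u'(i)=u(i)-|\{j\in S:j<i\}|$ is exactly the inverse of the paper's stars-and-bars bijection $u(i)=a_i+b_i+1$ (with $b_i=u'(i)-1$). The only cosmetic difference is that the paper lands directly on star--bar strings while you land on unconstrained weakly increasing maps $[n]\to[m-r]$ and then invoke the standard multiset count.
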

\begin{proof}
  We shall focus on the first identity. Let $v\in\Cay[n]$ and
  $a=\asc^{\circ}(v)$.  Recall that
  $\comp_m(v)=\Gamma\bigl(\WIgen_m(v)\times\{ v\}\bigr)$.  Since
  $\Gamma$ is an injective map on $\WIgen_m(v)\times\{ v\}$, it suffices
  to show that $|\WIgen_m(v)|=\multiset{m-a}{n}$.  Let $Y=Y_m(v)$ be the
  set of strings with letters ``$\star$'' and ``$\mid$'' that contain exactly
  $n$ stars and $m-a-1$ bars. Clearly,
  $$
  |Y|=\binom{n+m-a-1}{n}=\multiset{m-a}{n}.
  $$
  We shall construct a bijection between $\WIgen_m(v)$ and $Y$.  To each
  string $y\in Y$, we associate a weakly increasing map $u$ of length
  $n$ by setting, for $i\in[n]$,
  $$
  u(i)=a_i+b_i+1,
  $$
  where
  $a_i=|\{ j\in[i-1]:v(j)<v(j+1)\}|$ is the number
  of strict ascents preceding the $i$th entry of $v$, and
  $b_i$ is the number of bars preceding the $i$th star of $y$.

  By definition, $u$ is weakly increasing and
  \begin{align*}
    \max(u) = u(n)
    &= a_n+b_n+1 \\
    &\le a+(m-a-1)+1=m.
  \end{align*}
  Thus $u\in\WIgen_m$. Furthermore, if $i\in\Asc^{\circ}(v)$, then $a_{i+1}=a_i+1$ and hence
  \begin{align*}
    u(i+1) &= a_{i+1}+b_{i+1}+1 \\
           &= a_i+1+b_{i+1}+1 \\
           &\ge a_i+1+b_i+1 = u(i)+1.
  \end{align*}
  In other words, $\Asc^{\circ}(v)\subseteq\Asc^{\circ}(u)$. This is of course
  equivalent to $\Des(u)\subseteq\Des(v)$, i.e.\ $u\in\WIgen_m(v)$, as
  wanted. On the other hand, every $u\in\WIgen_m(v)$ gives rise to a
  string $y\in Y$, where the $i$th star is preceded by $u(i)-a_i-1$
  bars.  Therefore, the correspondence between $Y$ and $\comp_m(v)$ is
  bijective, which completes the proof of the first equality. We omit
  the proof of the second equality since it is obtained by a
  straightforward modification of the proof just given.
\end{proof}

\newcommand{\starr}{\makebox[1.3ex]{\ensuremath{\star}}}
\newcommand{\barr}{\makebox[1ex]{\ensuremath{\mid}}} Let us give an
example illustrating the bijection between $Y_m(v)$ and $\comp_m(v)$ in
the proof of Proposition~\ref{prop_num_of_compat}.  Let $m=9$, $n=6$ and
$v=221312\in\Cay[n]$.  Note that $\Des(v)=\{ 1,2,4\}$,
$\Asc^{\circ}(v)=\{ 3,5\}$ and $a_1a_2\dots a_n=000112$. Consider the string
\[
  y\,=\,\barr\starr\starr\barr\starr\barr\barr\starr\starr\barr\starr\barr.
\]
We have $b_1b_2\dots b_n=112445$ and calculating $u(i)=a_i+b_i+1$, for
each $i\in[n]$, we find that the map $u:[n]\to[m]$ associated with $y$
is $u=223668$. Note that $\Des(u)=\{ 1,4\}\subseteq\Des(v)=\{ 1,2,4\}$
and hence $u\in \WIgen_m(v)$.

\begin{theorem}\label{Thm:eqs-cayler}
For each $n\ge 0$,
\begin{itemize}
\item[$(i)$] $\,\displaystyle{\frac{tC_n(t)}{(1-t)^{n+1}}=
\sum_{m\ge 1}|\Genbur_{m}[n]|t^m};$
\item[$(ii)$] $\,\displaystyle{\frac{tC^{\circ}_n(t)}{(1-t)^{n+1}}=
\sum_{m\ge 1}|\BGenbur_{m}[n]|t^m};$
\item[$(iii)$] $\,\displaystyle{\frac{tA_n(t)}{(1-t)^{n+1}}=
\sum_{m\ge 1}|\PGenbur_{m}[n]|t^m}.$
\end{itemize}
\end{theorem}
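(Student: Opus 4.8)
The plan is to transplant, essentially verbatim, the derivation of Carlitz's identity given just above for permutations, now reading $\comp_m(v)$ through its definition $\comp_m(v)=\Gamma\bigl(\WIgen_m(v)\times\{v\}\bigr)$ (and its binary relative $\compbin_m(v)=\Gamma\bigl(\WIgenbin_m(v)\times\{v\}\bigr)$) so that the argument makes sense for all of $\Cay[n]$. All three identities fall out of one computation; what changes between them is only which $v$ one sums over and whether the weak statistic $\asc$ or the strict statistic $\asc^{\circ}$ enters.

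First I would collect the counting inputs. Since $\Gamma$ is injective on $\WIgen_m(v)\times\{v\}$ and on $\WIgenbin_m(v)\times\{v\}$, Proposition~\ref{prop_num_of_compat} yields
$$
|\WIgen_m(v)|=|\comp_m(v)|=\multiset{m-\asc^{\circ}(v)}{n}
\quand
|\WIgenbin_m(v)|=|\compbin_m(v)|=\multiset{m-\asc(v)}{n}
$$
for every $v\in\Cay[n]$. Substituting these into the disjoint unions $\Genbur_m[n]=\bigcup_{v\in\Cay[n]}\WIgen_m(v)\times\{v\}$, $\BGenbur_m[n]=\bigcup_{v\in\Cay[n]}\WIgenbin_m(v)\times\{v\}$ and $\PGenbur_m[n]=\bigcup_{v\in\Sym[n]}\WIgen_m(v)\times\{v\}$ (the last legitimate because $\WIgen_m(v)=\WIgenbin_m(v)$ and $\asc^{\circ}(v)=\asc(v)$ when $v$ is a permutation) writes each of $|\Genbur_m[n]|$, $|\BGenbur_m[n]|$, $|\PGenbur_m[n]|$ as a sum of multiset coefficients over the relevant index set.

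Next I would reuse the elementary identity already established on the way to Carlitz's identity above: for any integer $a\ge 0$,
$$
\sum_{m\ge1}\multiset{m-a}{n}t^m=\frac{t^{\,1+a}}{(1-t)^{n+1}},
$$
which is the multiset generating function~\eqref{eq_gf_multiset} after the index shift $m=1+a+k$. Summing over the appropriate $v$ turns each left-hand side into $\frac{t}{(1-t)^{n+1}}\sum_{v}t^{\,a(v)}$, and one concludes by recognizing this sum via $C_n(t)=\sum_{v\in\Cay[n]}t^{\asc(v)}$, $C^{\circ}_n(t)=\sum_{v\in\Cay[n]}t^{\asc^{\circ}(v)}$ and $A_n(t)=\sum_{v\in\Sym[n]}t^{\asc(v)}$; the case $n=0$ is immediate.

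I do not expect a genuine obstacle: the disjoint decompositions, Proposition~\ref{prop_num_of_compat} and~\eqref{eq_gf_multiset} are all in hand, and what remains is formal manipulation of power series. The single point demanding care --- and the one that fixes which descent polynomial sits on each line --- is the weak/strict bookkeeping: the non-binary fibers $\WIgen_m(v)$ have size $\multiset{m-\asc^{\circ}(v)}{n}$ whereas the binary fibers $\WIgenbin_m(v)$ have size $\multiset{m-\asc(v)}{n}$, so one must keep straight which of $C_n$ and $C^{\circ}_n$ emerges; on permutations the two statistics coincide, collapsing everything to the classical Carlitz identity of case $(iii)$.
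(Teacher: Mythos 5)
Your argument is the same as the paper's: both rest on Proposition~\ref{prop_num_of_compat} (giving $|\WIgen_m(v)|=\multiset{m-\asc^{\circ}(v)}{n}$ and $|\WIgenbin_m(v)|=\multiset{m-\asc(v)}{n}$), on the disjoint decompositions of $\Genbur_m[n]$, $\BGenbur_m[n]$ and $\PGenbur_m[n]$, and on the expansion $\sum_{m\ge 1}\multiset{m-a}{n}t^m=t^{a+1}/(1-t)^{n+1}$. However, the one step you explicitly defer --- ``keeping straight which of $C_n$ and $C^{\circ}_n$ emerges'' --- is precisely where the proof has to land, and your own fiber counts force a definite answer:
$$
\sum_{m\ge 1}|\Genbur_m[n]|\,t^m
=\frac{t}{(1-t)^{n+1}}\sum_{v\in\Cay[n]}t^{\asc^{\circ}(v)}
=\frac{t\,C^{\circ}_n(t)}{(1-t)^{n+1}},
\qquad
\sum_{m\ge 1}|\BGenbur_m[n]|\,t^m
=\frac{t\,C_n(t)}{(1-t)^{n+1}}.
$$
This is the transpose of the pairing printed in items $(i)$ and $(ii)$, so as written your proposal never reaches the stated conclusion, and if completed honestly it proves a different pairing from the printed one.

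The discrepancy is not on your side: the statement as printed appears to have $(i)$ and $(ii)$ interchanged. At $n=2$ one has $C_2(t)=1+2t$, $C^{\circ}_2(t)=2+t$, and $|\Genbur_1[2]|=2$ (the biwords $\binom{11}{11}$ and $\binom{11}{21}$), whereas the coefficient of $t$ in $tC_2(t)/(1-t)^3$ is $1$; similarly $|\BGenbur_1[2]|=1$ while the coefficient of $t$ in $tC^{\circ}_2(t)/(1-t)^3$ is $2$. The paper's own proof makes the same silent swap in its first line, where it writes $tC_n(t)/(1-t)^{n+1}=\sum_{v\in\Cay[n]}t^{\asc^{\circ}(v)+1}/(1-t)^{n+1}$, even though by Section~\ref{section_caylpol} the sum $\sum_{v\in\Cay[n]}t^{\asc^{\circ}(v)}$ equals $C^{\circ}_n(t)$, not $C_n(t)$. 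Item $(iii)$ is unaffected, since $\asc=\asc^{\circ}$ on $\Sym[n]$. To finish, carry out the last identification explicitly and record the result with $C_n$ and $C^{\circ}_n$ interchanged in $(i)$ and $(ii)$.
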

\begin{proof}
Let $a\in\nat$. Using equation~\eqref{eq_gf_multiset} we obtain
$$
\frac{t^{a+1}}{(1-t)^{n+1}}=
\sum_{k\ge 0}\multiset{n+1}{k}t^kt^{a+1}=
\sum_{m\ge 1}\multiset{m-a}{n}t^m,
$$
where the last equality follows by setting $m=k+a+1$ and noting
that
$\multiset{n+1}{k}=
\binom{n+k}{n}=
\multiset{m-a}{n}$.
Now, by using Proposition~\ref{prop_num_of_compat} and setting
$a=\asc^{\circ}(v)$ in the above equation, we get
\begin{align*}
\frac{tC_n(t)}{(1-t)^{n+1}}&=
\sum_{v\in\Cay[n]}\frac{t^{\asc^{\circ}(v)+1}}{(1-t)^{n+1}}\\
&=\sum_{v\in\Cay[n]}\sum_{m\ge 1}\multiset{m-\asc^{\circ}(v)}{n}t^m\\
&=\sum_{m\ge 1}\left(\sum_{v\in\Cay[n]}|\comp_m(v)|\right)t^m\\
&=\sum_{m\ge 1}\left(\sum_{v\in\Cay[n]}|\WIgen_m(v)|\right)t^m\\
&=\sum_{m\ge 1}|\Genbur_{m}[n]|t^m,
\end{align*}
where the last equality follows from the disjoint union
$$
\Genbur_{m}[n]=\bigcup_{v\in\Cay[n]}\WIgen_m(v)\times\{ v\}.
$$
The equation for $C^{\circ}_n(t)$ can analogously be obtained by setting
$a=\asc(v)$ instead of $a=\asc^{\circ}(v)$, and the equation for $A_n(t)$
follows in a similar manner.
\end{proof}

\section{Cayley permutations with a prescribed ascent set}\label{section_alphas}


Let $n\ge 1$ and consider a subset $S\subseteq[n-1]$ of size $r$, say
$S=\lbrace s_1,\dots,s_r\rbrace$, with $s_1<s_2<\dots<s_r$. Define
$$
\alphas=|\lbrace v\in\Sym[n]:\Asc(v)\subseteq S\rbrace|.
$$
It is well known (see e.g.\ Stanley~\cite{St}, Proposition 1.4.1) that
$\alphas$ is a polynomial in $n$ given by the multinomial coefficient
\begin{align*}
\alphas
  &= \binom{n}{s_1,s_2-s_1,s_3-s_2,\dots,n-s_r} \\
  &= \frac{n!}{s_1!(s_2-s_1)!(s_3-s_2)!\cdots(n-s_r)!}.
\end{align*}
Indeed, any permutation $v\in\Sym[n]$ such that $\Asc(v)\subseteq S$
is obtained by first choosing $s_1$ elements $v_1>v_2>\dots>v_{s_1}$,
then $s_2-s_1$ elements $v_{s_1+1}>v_{s_1+2}>\dots>v_{s_2}$, and so on.
In other words, $\alphas$ equals the number of
ballots on $[n]$ whose block sizes are given by $s_1$, $s_2-s_1$, \dots, $n-s_r$.

Transitioning from permutations to Cayley permutations---the theme of
this article--- we define
\begin{align*}
\betasw &=|\lbrace v\in\Cay[n]:\, \Asc(v)\subseteq S\rbrace|;\\
\betass &=|\lbrace v\in\Cay[n]:\, \Asc^{\circ}(v)\subseteq S\rbrace|.
\end{align*}
We wish to show that Burge matrices and Burge words provide an answer
to the following natural question:

\begin{center}
\emph{What is the combinatorial structure we should replace ballots with,
so that $\betasw$ and $\betass$ equals the number of structures on $[n]$
with block sizes $s_1$, $s_2-s_1$, \dots, $n-s_r$?}
\end{center}

Given $v\in\Cay[n]$, consider the biword
$$
\binom{\eta(S)}{v}=
\begin{pmatrix}
1\dots 1         & 2\dots 2               & \dots & r+1\dots r+1\\
v_1\dots v_{s_1} & v_{s_1+1}\dots v_{s_2} & \dots & v_{s_r+1}\dots v_n
\end{pmatrix},
$$
where
$$
\eta(S)=1^{s_1}2^{s_2-s_1}3^{s_3-s_2}\cdots r^{s_r-s_{r-1}}(r+1)^{n-s_r}
$$
is the unique weakly increasing Cayley permutation of length $n$
whose descent set is $\Des\bigl(\eta(S)\bigr)=[n-1]\setminus S$.
We will refer to the entries in $v$ that lie below the integer $i$ as
the $i$th \emph{block} of $v$ in the ballot induced by $S$.

\begin{lemma}\label{lemma_etaS}
For any $v\in\Cay[n]$,
$$
\Asc^{\circ}(v)\subseteq S\iff\binom{\eta(S)}{v}\in\Bur
\quad\text{and}\quad
\Asc(v)\subseteq S\iff\binom{\eta(S)}{v}\in\BBur.
$$
\end{lemma}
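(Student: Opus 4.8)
The plan is to peel off the definitions of $\Bur$ and $\BBur$ as sets of biwords and reduce each of the two equivalences to a complementation identity inside $[n-1]$. First I would dispense with the top row: since $\eta(S)$ is weakly increasing, $\binom{\eta(S)}{v}$ already lies in $\WI[n]\times\Cay[n]$, and every index of $[n-1]$ is either a weak descent or a strict ascent of $\eta(S)$, so that $\Asc^{\circ}(\eta(S))=[n-1]\setminus\Des(\eta(S))=S$ by the defining property $\Des(\eta(S))=[n-1]\setminus S$. Hence, by the characterization $\Bur[n]=\{(u,v)\in\WI[n]\times\Cay[n]:\Des(u)\subseteq\Des(v)\}$, membership $\binom{\eta(S)}{v}\in\Bur[n]$ is equivalent to the single inclusion $\Des(\eta(S))\subseteq\Des(v)$.

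Next I would record the two complementation identities, valid for every $v\in\Cay[n]$: $[n-1]\setminus\Des(v)=\Asc^{\circ}(v)$ and $[n-1]\setminus\Des^{\circ}(v)=\Asc(v)$, which hold because $v(i)\ge v(i+1)$ fails exactly when $v(i)<v(i+1)$, while $v(i)>v(i+1)$ fails exactly when $v(i)\le v(i+1)$. Taking complements in $[n-1]$ turns $\Des(\eta(S))\subseteq\Des(v)$ into $[n-1]\setminus\Des(v)\subseteq[n-1]\setminus\Des(\eta(S))=S$, that is, $\Asc^{\circ}(v)\subseteq S$; this is the first equivalence.

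For the second, I would invoke the inclusion $\Des^{\circ}(v)\subseteq\Des(v)$ observed in Section~\ref{section_binary}: it makes $\Des(\eta(S))\subseteq\Des^{\circ}(v)$ automatically imply $\Des(\eta(S))\subseteq\Des(v)$, so by the characterization $\BBur[n]=\{(u,v)\in\Bur[n]:\Des(u)\subseteq\Des^{\circ}(v)\}$, membership $\binom{\eta(S)}{v}\in\BBur[n]$ is equivalent to $\Des(\eta(S))\subseteq\Des^{\circ}(v)$ alone, which complements to $\Asc(v)\subseteq S$. There is no genuine obstacle here; the only thing requiring care is keeping the weak and strict versions straight---in particular, that $\Des(v)$ and $\Asc(v)$ are \emph{not} complementary in $[n-1]$, whereas $\Des(v)$ and $\Asc^{\circ}(v)$ are---which is why I would state the two complementation identities explicitly before using them.
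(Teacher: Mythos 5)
Your proposal is correct and follows essentially the same route as the paper: both arguments reduce membership of $\binom{\eta(S)}{v}$ in $\Bur$ (resp.\ $\BBur$) to the inclusion $\Des(\eta(S))\subseteq\Des(v)$ (resp.\ $\Des(\eta(S))\subseteq\Des^{\circ}(v)$) and then pass to complements in $[n-1]$ using $\Des(\eta(S))=[n-1]\setminus S$ together with the complementarity of $\Des(v)$ with $\Asc^{\circ}(v)$ and of $\Des^{\circ}(v)$ with $\Asc(v)$. Your explicit statement of the two complementation identities, and the remark that $\Des^{\circ}(v)\subseteq\Des(v)$ makes the $\Bur$-membership condition redundant in the $\BBur$ case, only make explicit what the paper leaves implicit.
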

\begin{proof}
To prove the first statement, observe that $\Asc^{\circ}(v)\subseteq S$ if and
only if $[n-1]\setminus S\subseteq\Des(v)$, which is equivalent to
$(\eta(S),v)\in\Bur$ since $\Des(\eta(S))=[n-1]\setminus S$.
In other words, we have just showed that $\Asc^{\circ}(v)\subseteq S$ if
and only if each block of $v$ in the ballot induced by $S$ is weakly
decreasing, which in turn is the same as $(\eta(S),v)\in\Bur$.
The second statement can be proved in a similar fashion. In this case,
$\Asc(v)\subseteq S$ if and only if each block of $v$ is strictly
decreasing, that is, $\Des(\eta(S))\subseteq\Des^{\circ}(v)$ and 
$(\eta(S),v)\in\BBur$.
\end{proof}

Now, define
$$
\Bur(S)=
\left\lbrace\binom{\eta(S)}{v}:v\in\Cay\right\rbrace \cap \Bur.
$$
Similarly, define $\BBur(S) =\Bur(S)\cap\BBur$ and
$\PBur(S)=\Bur(S)\cap\PBur$.
It is straightforward to describe the sets of Burge matrices
corresponding to $\Bur(S)$, $\BBur(S)$ and $\PBur(S)$:
Recall that, if $A=\map^{-1}(u,v)$ is the Burge matrix corresponding
to $(u,v)\in\Bur$, then $A$ is the $\max(u)\times\max(v)$
matrix whose entry $a_{ij}$ is equal to the number of columns
$\binom{i}{j}$ contained in $(u,v)$. Now, if $u=\eta(S)$, then
the first row of $A$ sums to $s_1$, the second row sums to $s_2-s_1$, and
so on up the $(r+1)$th (and last) row, whose sum is $n-s_r$.
To express this in a more compact form, let $\ones$ denote the all ones
vector so that $\hproj{A}$ is the \emph{row sum vector}
of $A$, i.e.\ the vector whose $i$th entry is equal
to the sum of the $i$th row of $A$.
We then define
\begin{align*}
\Mat(S)&=\lbrace A\in\Mat:\hproj{A}=\Delta(S)\rbrace,\\
\intertext{where $\Delta(S) = (s_1,s_2-s_1,\dots,s_r-s_{r-1},n-s_r)$. We further define}
\BMat(S)&=\Mat(S)\cap\BBur(S);\\
\PMat(S)&=\Mat(S)\cap\PBur(S).
\end{align*}
In analogy with Proposition~\ref{prop_map_subsets}, we have
$$
\map\bigl(\Mat(S)\bigr)=\Bur(S);\quad
\map\bigl(\BMat(S)\bigr)=\BBur(S);\quad
\map\bigl(\PMat(S)\bigr)=\PBur(S).
$$
Let $\Bur(S)[n] = \Bur(S)\cap \Bur[n]$, $\Mat(S)[n]=\Mat(S)\cap\Mat[n]$, etc.
The following proposition is an immediate consequence of Lemma~\ref{lemma_etaS}.

\begin{theorem}\label{Thm:prescr-ascset}
For each $n\ge 1$ and $S\subseteq[n-1]$,
\begin{itemize}
\item[$(i)$] $\betass=|\Bur(S)[n]|=|\Mat(S)[n]|;$
\item[$(ii)$] $\betasw=|\BBur(S)[n]|=|\BMat(S)[n]|;$
\item[$(iii)$] $\alphas=|\PBur(S)[n]|=|\PMat(S)[n]|.$
\end{itemize}
\end{theorem}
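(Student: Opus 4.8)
The plan is to derive all three statements directly from Lemma~\ref{lemma_etaS}, together with the fact recorded just before the theorem that $\map$ restricts to bijections $\Mat(S)\to\Bur(S)$, $\BMat(S)\to\BBur(S)$ and $\PMat(S)\to\PBur(S)$, and hence to bijections between the corresponding sets of size $n$. Throughout, the workhorse is the map $v\mapsto\binom{\eta(S)}{v}$, which is injective because its bottom row recovers $v$.

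For part $(i)$: a biword lies in $\Bur(S)[n]$ if and only if it is of the form $\binom{\eta(S)}{v}$ with $v\in\Cay[n]$ \emph{and} it belongs to $\Bur$; by the first equivalence of Lemma~\ref{lemma_etaS} the latter condition is exactly $\Asc^{\circ}(v)\subseteq S$. Thus $v\mapsto\binom{\eta(S)}{v}$ is a bijection from $\{v\in\Cay[n]:\Asc^{\circ}(v)\subseteq S\}$ onto $\Bur(S)[n]$, giving $\betass=|\Bur(S)[n]|$; composing with $\map^{-1}$ yields $|\Bur(S)[n]|=|\Mat(S)[n]|$, which proves $(i)$.

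Part $(ii)$ is obtained by the same argument verbatim, replacing $\Bur$, $\Mat$ with $\BBur$, $\BMat$ and invoking the second equivalence of Lemma~\ref{lemma_etaS}, namely $\binom{\eta(S)}{v}\in\BBur\iff\Asc(v)\subseteq S$. For part $(iii)$, intersect the bijection of $(i)$ with the constraint defining $\PBur$: since $\PBur[n]=\{(u,v)\in\Bur[n]:v\in\Sym[n]\}$, the preimage of $\PBur(S)[n]=\Bur(S)[n]\cap\PBur[n]$ under $v\mapsto\binom{\eta(S)}{v}$ is $\{v\in\Sym[n]:\Asc^{\circ}(v)\subseteq S\}$. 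The one observation needed here is that a permutation has no equal consecutive entries, so $\Asc^{\circ}(v)=\Asc(v)$ for $v\in\Sym[n]$; hence this set equals $\{v\in\Sym[n]:\Asc(v)\subseteq S\}$, of cardinality $\alphas$, and applying $\map^{-1}$ once more gives $|\PBur(S)[n]|=|\PMat(S)[n]|$.

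No real obstacle is expected: the argument is bookkeeping around Lemma~\ref{lemma_etaS}. The only point deserving a second glance is that $\binom{\eta(S)}{v}$ is a \emph{bona fide} Burge word---columns sorted by ascending top entry, ties broken by descending bottom entry---precisely when the hypothesis of Lemma~\ref{lemma_etaS} holds; but this is exactly what the lemma encapsulates, since $\Des(\eta(S))=[n-1]\setminus S$ and the sorting requirement within each constant-top block is the condition $\Des(\eta(S))\subseteq\Des(v)$.
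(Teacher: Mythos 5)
Your proposal is correct and follows exactly the route the paper intends: the paper states the theorem is an immediate consequence of Lemma~\ref{lemma_etaS} together with the bijections $\map\bigl(\Mat(S)\bigr)=\Bur(S)$, $\map\bigl(\BMat(S)\bigr)=\BBur(S)$, $\map\bigl(\PMat(S)\bigr)=\PBur(S)$ recorded just before it, and your write-up simply makes that bookkeeping explicit via the injection $v\mapsto\binom{\eta(S)}{v}$.
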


Arguably, we have now established that Burge matrices of size $n$ with
fixed row sums,
i.e.\ $\Mat(S)[n]$ and $\BMat(S)[n]$, and Burge words of size $n$ whose
top row is equal to $\eta(S)$, i.e.\ $\Bur(S)[n]$ and $\BBur(S)[n]$,
are combinatorial structures that generalize ballots so that
$\betass$ and $\betasw$ count the number of structures on $[n]$
whose ``block sizes'' are given by $S\subseteq[n-1]$.

Two expressions relating $\betass$ and $\betasw$ to permutations follow from
Lemma~\ref{lemma_basis_descents}:
\begin{align*}
  \betass &=\sum_{\substack{v\,\in\,\Sym[n]\\ \Asc(v)\,\subseteq\, S}}2^{\des(v^{-1})};\\[1ex]
  \betasw &=\sum_{\substack{v\,\in\,\Sym[n]\\ \Des(v)\,\subseteq\, S}}2^{\des(v^{-1})}.
\end{align*}
Indeed, for any $w\in\Sym$ and Cayley permutation $y\in\basis(w)$ we have
$\Des(y)=\Des(w^{-1})$ and $\Asc^{\circ}(y)=\Asc(w^{-1})$. Furthermore,
$|\basis(w)|=2^{\des(w)}$, $\Cay[n]=\bigcup_{w\in\Sym[n]}\basis(w)$
and the desired identities follow by setting $v=w^{-1}$.

\section{Future work\label{section_final}}

In Theorem~\ref{Thm:twosided-caylerians}, we showed that
$\twoeul_n(1+s,1+t)=\twocay_n(s,t)$.
In other words, by marking descents and inverse descents
of permutations with $1+s$ and $1+t$, respectively,
we obtain Burge matrices $A$ with weight $s^{n-\rows(A)}t^{n-\cols(A)}$;
or, equivalently, Burge words $(u,v)$ with weight $s^{n-\max(u)}t^{n-\max(v)}$.
Going one step further, what structure arises from $\twocay_n(1+s,1+t)$?

Working with generating functions it is easy to show that
\[
  A_n(-1)=
  \begin{cases}
    (-1)^{(n-1)/2} E_n &\text{if $n$ is odd,} \\
    0 &\text{if $n$ is even,}
  \end{cases}
\]
where $E_n$ is the $n$th Euler number. Can something interesting be said
about the Caylerian polynomials evaluated at $-1$?  Due to the symmetry
$C^{\circ}_n(t)=t^{n-1}C_n(1/t)$, we have $C^{\circ}_n(-1)=(-1)^{n-1}C_n(-1)$.
The sequence $C_n(-1)$, $n\ge 0$, begins (see also A365449 in the
OEIS~\cite{Sl})
$$
1,1,-1,-3,11,45,-301,-1475,14755,85629.
$$

As mentioned in Section~\ref{section_alphas}, the number $\alphas$ of
permutations of size $n$ whose ascent set is contained in $S$ is given
by a polynomial in $n$. The corresponding property is false for
$\betass$. In particular, $\kappa^{\circ}_n(\emptyset)=2^{n-1}$ for $n\geq 1$.
We have, however, gathered some numerical evidence that suggests that
$\betasw$ is a polynomial and we make the following conjecture (in which
$\max(\emptyset)=0$ by convention).

\begin{conjecture}
  Let $S$ be any finite subset of positive integers and let
  $k=\max(S)$. Then the sequence $n\mapsto\kappa_{n+k}\bigl(S\bigr)$ is
  a polynomial in $n$ of degree $k$.
\end{conjecture}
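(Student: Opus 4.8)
The plan is to reduce, via Theorem~\ref{Thm:prescr-ascset}$(ii)$, to counting binary Burge matrices with prescribed row sums, encode these by a rational generating function, factor out the variable that records the size of the last (unbounded) block, and then extract a coefficient which will manifestly be a polynomial in $n$ of degree $k$. Write $S=\{s_1<\dots<s_r\}$, set $s_0=0$, $s_r=k$, and $d_i=s_i-s_{i-1}$ for $1\le i\le r$, so that $d_1+\dots+d_r=k$ and each $d_i\ge 1$. For $n\ge 1$ we have $S\subseteq[n+k-1]$, and by Theorem~\ref{Thm:prescr-ascset}$(ii)$, $\kappa_{n+k}(S)=|\BMat(S)[n+k]|$, the number of binary Burge matrices of size $n+k$ with row sum vector $(d_1,\dots,d_r,n)$ (the last entry being $(n+k)-s_r=n$). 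Equivalently, such a matrix is a finite sequence $(c_1,\dots,c_m)$ of columns, each a nonzero vector in $\{0,1\}^{r+1}$, in which row $i$ receives exactly $d_i$ ones, where $d_{r+1}:=n$; the rows are automatically nonzero since each $d_i\ge 1$.

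Next I would pass to generating functions. Introducing commuting variables $x_1,\dots,x_{r+1}$ with $x_i$ marking a one in row $i$, a single admissible column $c$ contributes $\prod_{i\in c}x_i$, so the sum over all nonzero columns is $\prod_{i=1}^{r+1}(1+x_i)-1$, and summing over sequences of columns gives
$$
\sum_{m\ge 0}\Bigl(\prod_{i=1}^{r+1}(1+x_i)-1\Bigr)^{\!m}=\frac{1}{2-\prod_{i=1}^{r+1}(1+x_i)} ,
$$
whence $\kappa_{n+k}(S)$ is the coefficient of $x_1^{d_1}\cdots x_r^{d_r}x_{r+1}^{\,n}$ in this series. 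The key step is to factor out the last block's variable: with $P=\prod_{i=1}^{r}(1+x_i)$ we have $\prod_{i=1}^{r+1}(1+x_i)=P+Px_{r+1}$, so
$$
\frac{1}{2-P-Px_{r+1}}=\frac{1}{2-P}\sum_{n\ge 0}\Bigl(\frac{P}{2-P}\Bigr)^{\!n}x_{r+1}^{\,n} ,
$$
and therefore $\kappa_{n+k}(S)=\bigl[x_1^{d_1}\cdots x_r^{d_r}\bigr]\,P^n(2-P)^{-(n+1)}$.

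It then remains to read off polynomiality and degree. Writing $P=1+p$ with $p=P-1$ of zero constant term (so $2-P=1-p$), the binomial series gives
$$
\frac{P^n}{(2-P)^{n+1}}=(1+p)^n(1-p)^{-(n+1)}=\sum_{j\ge 0}a_j(n)\,p^j,
\qquad
a_j(n)=\sum_{i+\ell=j}\binom{n}{i}\binom{n+\ell}{\ell},
$$
and each $a_j(n)$ is a polynomial in $n$ of degree $j$ with positive leading coefficient $\sum_{i+\ell=j}\tfrac1{i!\,\ell!}=\tfrac{2^j}{j!}$. Since $p$ has valuation $\ge 1$ while $d_1+\dots+d_r=k$, only the terms with $j\le k$ survive when one extracts the coefficient of $x_1^{d_1}\cdots x_r^{d_r}$; setting $b_j=[x_1^{d_1}\cdots x_r^{d_r}]\,p^j$ (a nonnegative integer not depending on $n$) yields
$$
\kappa_{n+k}(S)=\sum_{j=0}^{k}b_j\,a_j(n),
$$
a polynomial in $n$. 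Its degree is $\max\{\,j\le k:b_j\ne 0\,\}$, and $b_k=[x_1^{d_1}\cdots x_r^{d_r}]\,(P-1)^k=\binom{k}{d_1,\dots,d_r}>0$ because, among the monomials of $P-1=\sum_{\emptyset\ne T\subseteq[r]}\prod_{i\in T}x_i$, only the linear ones $x_1,\dots,x_r$ can combine --- in a product of $k$ factors --- into a monomial of total degree $k$. Hence the degree is exactly $k$.

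The computation is essentially routine once the reduction is in place; the one genuinely delicate point is the factorization of $\prod_i(1+x_i)$ that isolates $x_{r+1}$ and leaves $n$ appearing only as an exponent in $P^n/(2-P)^{n+1}$ --- this is what converts ``fixed block sizes $d_1,\dots,d_r$ plus one variable block of size $n$'' into a coefficient extraction whose $n$-dependence is visibly polynomial after truncating modulo total degree $>k$. Everything else (absence of cancellation in the degree, positivity of the multinomial coefficient) is bookkeeping.
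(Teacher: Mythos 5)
The paper offers no proof of this statement---it is posed as an open conjecture---so there is no argument of the authors' to compare yours against. Having checked your proposal step by step, I believe it is a complete and correct proof, and hence settles the conjecture. The reduction via Theorem~\ref{Thm:prescr-ascset}$(ii)$ to binary Burge matrices with row sum vector $(d_1,\dots,d_r,n)$ is exactly what the theorem provides (the row condition is indeed vacuous since all prescribed row sums are positive, and a binary Burge matrix with $r+1$ rows is precisely an ordered sequence of nonzero columns in $\{0,1\}^{r+1}$). The transfer-matrix style generating function $\bigl(2-\prod_i(1+x_i)\bigr)^{-1}$ is the standard sequence construction and is a well-defined formal power series because $\prod_i(1+x_i)-1$ has zero constant term; the extraction of the coefficient of $x_{r+1}^n$ as $P^n(2-P)^{-(n+1)}$ is likewise sound. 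The substitution $P=1+p$ with $p$ of positive valuation correctly truncates the expansion at $j=k=d_1+\cdots+d_r$, each $a_j(n)$ is a polynomial of degree exactly $j$ with positive leading coefficient $2^j/j!$, the $b_j$ are nonnegative, and $b_k=\binom{k}{d_1,\dots,d_r}>0$, so the degree is exactly $k$ with no possibility of cancellation. As a sanity check, your closed form
\[
\kappa_{n+k}(S)=\bigl[x_1^{d_1}\cdots x_r^{d_r}\bigr]\,\frac{\prod_{i=1}^r(1+x_i)^n}{\bigl(2-\prod_{i=1}^r(1+x_i)\bigr)^{n+1}}
\]
reproduces all seven polynomials conjectured in the paper for $S\subseteq\{1,2,3\}$ (e.g.\ it gives $2n^2+2n+1$ for $S=\{2\}$ and $8n^3+24n^2+30n+13$ for $S=\{1,2,3\}$), and it also handles the degenerate case $S=\emptyset$, $k=0$. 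Beyond proving the conjecture, the argument yields an explicit positive expansion $\kappa_{n+k}(S)=\sum_{j=0}^k b_j a_j(n)$ in the basis $a_j(n)=\sum_{i+\ell=j}\binom{n}{i}\binom{n+\ell}{\ell}$, which is more than the statement asks for; it would be worth recording that the analogous computation fails for $\kappa^{\circ}$ precisely because there the unbounded block is the one whose variable cannot be isolated in the same way.
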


For instance, the polynomials for $S\subseteq \{1,2,3\}$  are conjectured to be
\begin{gather*}
\kappa_n\bigl(\emptyset\bigr) = 1;\quad
\kappa_{n+1}\bigl(\{1\}\bigr) = 1 + 2 n;\quad
\kappa_{n+2}\bigl(\{2\}\bigr) = 1 + 2 n + 2 n^{2}; \\[1ex]
\kappa_{n+3}\bigl(\{3\}\bigr) = 1 + \frac{8}{3} n + 2 n^{2} + \frac{4}{3} n^{3};\quad
\kappa_{n+2}\bigl(\{1, 2\}\bigr) = 3 + 6 n + 4 n^{2}; \\[1ex]
\kappa_{n+3}\bigl(\{1, 3\}\bigr) =
\kappa_{n+3}\bigl(\{2, 3\}\bigr) = 5 + 12 n + 10 n^{2} + 4 n^{3}; \\[1ex]
\kappa_{n+3}\bigl(\{1, 2, 3\}\bigr) = 13 + 30 n + 24 n^{2} + 8 n^{3}.
\end{gather*}

There is a nice determinant formula for the number,
\[
  \beta_n(S)=|\lbrace v\in\Sym[n]:\Asc(v)= S\rbrace|,
\]
of permutations of $[n]$ whose ascent set is equal to $S$, namely
\[
  \beta_n(S)=n!\det\bigl[1/(s_{j}-s_{i-1})\bigr],
\] where $S=\{s_1,s_2,\dots,s_k\}$ and $(i,j)\in[k+1]\times [k+1]$.  See
Stanley~\cite[p.\ 229]{St} for further details. Is there a nice formula
for the number of Cayley permutations whose ascent set is $S$?



One way to generalize the number $\alphas$ to a polynomial in $t$ is to let
\[
\alpha_n(S;t) =
  \sum_{\substack{v\in\Sym[n]\\ \Asc(v)\subseteq S}} t^{\des(v^{-1})}.
\]
for $S\subseteq [n-1]$. Then $\alpha_n(S;1)=\alphas$ and
$\betasw=\alpha_n(S;2)$. What can we more generally say about
$\alpha_n(S;t)$?

In this paper, we have obtained a variety of results linking the
(two-sided) Caylerian polynomials, as well as the coefficients $\betasw$
and $\betass$, to certain sets of Burge words and Burge matrices.  An
in-depth study of the enumerative properties of the latter
structures will be carried out in a forthcoming paper.

\end{document}